\titleformat{\subsection}{\it}{\thesubsection.\enspace}{1.5pt}{}
\titleformat{\subsubsection}{\it}{\thesubsubsection.\enspace}{1.5pt}{}
\newtheorem{theo}{Theorem}[section]
\newtheorem{lemm}[theo]{Lemma}
\newtheorem{nota}[theo]{Notation}
\newtheorem{rema}{Remark}[section]
\numberwithin{equation}{section}
\def\th2{\frac{\theta}{2}}
\begin{document}
\title{The stability of current vortex sheets with transverse magnetic field\hspace{-4mm}}
\author{ Binqiang Xie $^{*}$,  Yueyang Feng  $^{\dag}$, Ying Zhang $^{\ddag}$ \\[10pt]
	\small {$^{*}$ School of Mathematics and Statistics,}\\
	\small { Guangdong University of Technology, Guangzhou, Guangdong,  510006, China}
	\\
       \small {$^\dag$ Institute of Applied Physics and Computational Mathematics, Beijing, 100868,  China}
	\\
	\small {$^\ddag$  National Center for Mathematics and Interdisciplinary Sciences,}\\
 \small { Academy of Mathematics and Systems Science,  Chinese Academy of Sciences, Beijing, 100190, China}\\[5pt]
}


\footnotetext{
	
	E-mail addresses: {\it  xbqmath@gdut.edu.cn (B.Q. Xie).
    
    \quad \quad \quad \quad \quad \quad \quad  \quad fengyueyang23@gscaep.ac.cn (Y.Y. Feng).
    
    \quad \quad \quad \quad \quad \quad \quad  \quad   zhangying24@amss.ac.cn (Y. Zhang)}

    Corresponding Author: \it Y. Zhang (zhangying24@amss.ac.cn )
    
    }
\date{}

\maketitle
\begin{abstract}
Compared to the results in \cite{Shivamoggi}, using the normal mode method, we have rigorously confirmed that a transverse magnetic field reduces the stability of the system. Specifically, a larger velocity is required for stability in the presence of a magnetic field than in its absence. More precisely, when the magnitude of the magneto-acoustic Mach number $M_{B}:=\frac{\dot{v}_1^{+}}{\bar{C}_{B}}>\sqrt{2}$, we proved the  well-posedness of the current  vortex sheet problem for compressible MHD flows with a transverse magnetic field. 

\vspace*{5pt}
\noindent{\it {\rm Keywords}}:
free surface; current vortex sheets; transverse magnetic field.

\vspace*{5pt}
\noindent{\it {\rm 2010 Mathematics Subject Classification}}:
76W05, 35Q35, 35D05, 76X05.
\end{abstract}


\section{Introduction}
\quad
The vortex sheet is an idealized model of shear flow, typically representing a surface where there is a discontinuity in vorticity within the fluid. In other words, the vortex sheet is a surface where the velocity experiences a jump, resulting in a discontinuity in the tangential velocity of the fluid. This phenomenon typically arises between two distinct fluid regions, such as different flow directions. The study of vortex sheets not only deepens the understanding of fundamental problems in fluid mechanics but also advances the interdisciplinary development of a variety of spaces and astrophysical phenomena, involving sheared plasma flow such as the stability of the interface between the solar wind and the magnetosphere \cite{Dungey,Parker}, interactions between adjacent streams of different velocities in the solar wind \cite{Sturrock} and the dynamic structure of cometary tails \cite{Ershkovich}.  In addition to these visible wavy or vortex structures, large-scale phenomena are also observed by spacecraft in the velocity-shear regions of space plasma both at the flank magnetopause \cite{Ch}.

In the context of 2D Euler equations in gas dynamics, rectilinear vortex sheets are linearly stable when the Mach number $M>\sqrt{2}$, and they  become violently unstable when the Mach number $M<\sqrt{2}$ \cite{Miles,Miles1957}. In pioneering works, Coulombel-Secchi \cite{Coulombel, Secchi} demonstrated the nonlinear stability and local existence of vortex sheets for ideal compressible fluids using micro-local analysis and the Nash-Moser method. Subsequently, Morando-Trebeschi-Wang \cite{Trebeschi1, Trebeschi2} extended this result to 2D ideal nonisentropic compressible flows. Recently, numerous stability results for 2D systems have been obtained. The method developed by Coulombel-Secchi \cite{Coulombel} has been applied to 2D magnetohydrodynamics (MHD) flows, leading to the derivation of a necessary and sufficient condition for the linear stability of rectilinear current vortex sheets by Wang-Yu \cite{Yu1}. For vortex sheets in elastic flows, Chen-Hu-Wang and Chen-Hu-Wang-Wang-Yuan \cite{Hu, Chen, Chen3} have established the nonlinear stability and local existence of compressible vortex sheets for 2D isentropic elastic fluids.

For 3D Euler equations in gas dynamics, planar vortex sheets are violently unstable, and this instability is analogous to the Kelvin-Helmholtz instability observed in incompressible fluids \cite{Serre}. In the context of 3D compressible MHD flows, Trakhinin \cite{Trakhinin2, Trakhinin} and Chen-Wang \cite{Wang} employed a distinct symmetrization approach to demonstrate the linear and nonlinear stability of compressible vortex sheets. Additionally, the stabilizing effects of magnetic fields on current vortex sheets have been shown in the incompressible case as well. Under the Syrovatskij stability condition \cite{Syrovatskij}, Morando-Trakhinin-Trebeschi \cite{Morando2} established a priori estimate with a loss of three-order derivatives for the linearized system. Trakhinin \cite{Trakhinin} proved a priori estimate without loss of derivatives from data for the linearized system with variable coefficients under a strong stability condition. Recently, Coulombel-Morando-Secchi-Trebeschi  \cite{Coulombel2} demonstrated a priori estimate without loss of derivatives for the nonlinear current-vortex sheet problem under the strong stability condition. The nonlinear stability of the incompressible current-vortex sheet problem was established by Sun-Wang-Zhang \cite{Sun} under the Syrovatskij stability condition. For 3D compressible isentropic elastic flows, Chen-Huang-Wang-Yuan \cite{chen1} proved the stabilizing effect of elasticity on 3D compressible vortex sheets.  For the 3D spacetime relativistic Euler equations, Chen-Paolo-Wang \cite{Paolo} obtained the necessary and sufficient condition for the weakly linear stability of relativistic vortex sheets in Minkowski spacetime and nonlinear stability of relativistic vortex sheets under small initial perturbations.

Although the linear stability of 2D rectilinear current vortex sheets has been established and a necessary and sufficient condition is provided \cite{Yu1}, the analogous issue in 3D appears to yield only a sufficient condition \cite{Trakhinin2}. In this paper, we focus on a very specific class of 3D current vortex sheets, where we can still derive a necessary and sufficient condition. We provide a rigorous confirmation that the magnetic field does not always enhance the stability of the compressible system under consideration. However, it is intriguing to note that the effects of compressibility are significantly mitigated when a transverse magnetic field is presented. The transverse magnetic field influences ideal MHD by altering the pressure contributions, which in turn modify the characteristic magnetosonic wave speed. Our analytical framework for the well-posedness of current vortex sheets is strongly inspired by the work of Morando-Secchi-Trebeschi \cite{Morando}.

 More precisely,  we consider the following compressible inviscid MHD equations in the whole plane $\mathbb{R}^{3}$ for time $t\geq0$:
\begin{equation}\label{1.1}
\left\{
\begin{aligned}
&\partial_{t} \rho + {\rm div} (\rho u)=0,\\
&\partial_{t}(\rho u) + {\rm div}(\rho u \otimes u- H \otimes H) + \nabla (p+\frac{|H|^{2}}{2})=0,\\
&\partial_{t} H- \nabla \times (u\times H)=0,
\end{aligned}
\right.
\end{equation}
where the functions $\rho$, $u=(u_{1},u_{2},u_{3})$, $p$ and $H=(H_{1},H_{2},H_{3})$
represent the fluid density, the velocity, the pressure and the magnetic field respectively. We assume that $p$ is a $C^{\infty}$ function of $\rho$, defined on $]0,\infty[$, and such that $p^{\prime}(\rho)>0$ for all $\rho$.  For the sake of clarity, we choose the pressure which satisfies the polytropic laws, i.e. $p(\rho)=A \rho^{\gamma}$ for $A>0$, $\gamma\geq 1$.  For convenience, we take $A=1$. The speed of sound $c(\rho)$  in the fluid is defined by the relation:
\begin{equation}\label{1.2}
\forall \rho >0, ~c(\rho)= \sqrt{p^{\prime}(\rho)}.
\end{equation}
The system \eqref{1.1} is supplemented by the divergence constraint
\begin{equation}\label{1.3}
{\rm div} H=0,
\end{equation}
this property holds at any time throughout the flow if it is satisfied initially.

Let $U(t,x_{1},x_{2},x_{3})= (\rho,u, H)(t,x_{1},x_{2},x_{3})$ be a solution of system \eqref{1.1} which is smooth on each side of  a surface  $\Gamma(t):= \{x_{3}=f(t,x_{1},x_{2}) \}$. Here, function $f$ describing the discontinuity front is part of the unknown of the problem, i.e., this is a free boundary problem and $x_{1},x_{2}$ are tangential coordinates. The whole space $\mathbb{R}^{3}$ can be separated by $\Gamma(t)$ into the upper domain $\Omega^+(t)$ and the lower domain $\Omega^-(t)$, which are defined by $$\Omega^+(t):=\{x_3>f(t,x_1,x_{2})\},$$
and  $$\Omega^-(t):=\{x_3<f(t,x_1,x_{2})\}.$$\\
We denote that
\begin{equation} \label{1.4}
	U=\left\{
	\begin{aligned}
		& U^{+}(t,x_{1},x_{2},x_{3}),  &x_3>f(t,x_{1},x_{2}) ,\\
		&U^{-}(t,x_{1},x_{2},x_{3}), &x_3<f(t,x_{1},x_{2}),
	\end{aligned}
	\right.
\end{equation}
where $U^{\pm}= (\rho^{\pm},u^{\pm}, H^{\pm})$.  To be weak solutions of \eqref{1.1} such piecewise smooth solutions should satisfy the  Rankine-Hugoniot conditions on $\Gamma(t)$:
\begin{equation}\label{1.5}
\left\{
\begin{aligned}
&[j]=0,\\
&j[u_{n}]+ [q]=0,~~~ j [u_{\tau}]= H_{n}[H_{\tau}],\\
&j [H_{\tau}/\rho] = H_{n} [u_{\tau}],\\
&[H_{n}]=0,
\end{aligned}
\right.
\end{equation}
where $j=\rho (\partial_{t} f -  u \cdot n)$ is the mass transfer flux across the discontinuity surface, $q=p+\frac{|H|^{2}}{2}$ is the total pressure, $n=(-\partial_{x_{1}} f, -\partial_{x_{2}} f, 1)$ is a normal vector to $\Gamma(t)$,  $\tau_{1}= (1 ,0,\partial_{1} f)$  and  $\tau_{2}= (0, 1,\partial_{2} f)$ are  tangential vectors to $\Gamma(t)$ . We also define these notation $H_{n}= <H,n>, u_{n}=<u,n>, H_{\tau_{i}}=<H,\tau_{i}>, i=1, 2,\ H_{\tau}=(H_{\tau_{1}},H_{\tau_{2}}), u_{\tau_{i}}=<u,\tau_{i}>, i=1,2, u_{\tau}=(u_{\tau_{1}},u_{\tau_{2}}) $ where $<,>$ means the inner product. Also, $[\phi]= \phi^{+}-\phi^{-}$ denotes the jump where the function $\phi$ across the hypersurface $\Gamma$.

Since we are interested in the Kelvin-Helmholtz instability  for  MHD flow, the mass does not transfer across the discontinuity interface $\Gamma(t)$:
\begin{equation}\label{1.6}
j^{\pm}=0, ~~{\rm on} ~~ \Gamma(t),
\end{equation}
now in view of the \eqref{1.6}, we can obtain
\begin{equation}\label{1.7}
[q]=0, \quad H^{\pm}_{n}=0, ~~{\rm on}~~\Gamma(t).
\end{equation}

Therefore, for 3D MHD Kelvin-Helmholtz instability,  the Rankine-Hugoniot conditions give the boundary conditions
\begin{equation}\label{1.8}
\partial_{t} f=u^{+}\cdot n=u^{-}\cdot n,~~p^{+}+ \frac{|H^{+}|^{2}}{2}=p^{-}+ \frac{|H^{-}|^{2}}{2},~~H^{+} \cdot n=H^{-} \cdot n=0,~~{\rm on}~~\Gamma(t).
\end{equation}

The system \eqref{1.1} is supplemented with the initial data 
\begin{equation}\label{1.9}
	\rho^{\pm}(0,x)=\rho^{\pm}_{0}(x),~ u^{\pm}(0,x)=u^{\pm}_{0}(x),~H^{\pm}(0,x)=H^{\pm}_{0}(x),~\text{in}~\Omega^{\pm}(0).
\end{equation}


Let $\sigma(p) = \log\rho(p)$, then, in terms of $(\sigma,u,H)$, \eqref{1.1} can be rewritten as 
\begin{align}\label{A1.1}
    \begin{cases}
        &\partial_{t} \sigma + (u\cdot \nabla)\sigma +\nabla\cdot u=0,\\
&\partial_{t}u + (u\cdot \nabla)u-\frac{1}{\rho}(H\cdot \nabla)H+c^2\nabla\sigma+\frac{1}{\rho}\nabla\frac{|H|^2}{2}=0,\\
&\partial_{t} H+(u\cdot \nabla)H+H\nabla\cdot u-(H\cdot \nabla )u=0.
    \end{cases}
\end{align}
where the speed of sound is considered as a function of $\sigma$, in fact, $c^2(\sigma):=\gamma {\rm{e}}^{\sigma(\gamma-1)}$.

The jump conditions \eqref{1.8} may be rewritten as 
\begin{equation} \label{1.11}
\begin{aligned}
e^{\sigma^{+}\gamma }+\frac{|H^{+}|^{2}}{2}=e^{\sigma^{-}\gamma}+\frac{|H^{-}|^{2}}{2},
u^{+}\cdot n= u^{-}\cdot n, ~H^{+} \cdot n=H^{-} \cdot n=0~~\mathrm{on}~~\Gamma(t).
\end{aligned}
\end{equation}

\subsection{Rectilinear solution}
\quad  It is easy to see that the system \eqref{1.1}-\eqref{1.7} admits  rectilinear solutions $\dot{U}=(\dot{f}, \dot{\rho}^{\pm},\dot{u}^{\pm},\dot{H}^{\pm})$ with the interface given by $\{x_{3}=0\}$ for all $t\geq0$. Then  $\Omega^{+}=\Omega^{+}(t)=\mathbb{R}^2\times (0,\infty)$ and  $\Omega^{-}=\Omega^{-}(t)=\mathbb{R}^2\times (-\infty,0)$ for all $t\geq0$.  More precisely, the front is flat, i.e., $\dot{f}=0$. To make sure the constant   density $\dot{\rho}^{\pm}$ satisfy the jump condition \eqref{1.7}, we must impose that 
\begin{equation} \label{1.12}
	\dot{\rho}^{+}=	\dot{\rho}^{-}:=\dot{\rho},
\end{equation}
where $\dot{\rho}$ is a positive constant. By using the Galilean transformation,  we can see that the upper fluid moves in the horizontal direction with some constant velocities and the lower fluid moves by the same constant velocities in the opposite direction, i.e., the  constant velocity field $\dot{u}^{\pm}$ has the following form:
\begin{equation}\label{1.13}
	\dot{u}=\left\{
	\begin{aligned}
		&(\dot{u}^{+}_{1},0,0),&x_3\geq0,\\
		&(\dot{u}^{-}_{1},0,0),&x_3<0,
	\end{aligned}
	\right.
\end{equation}\label{1.14}
where the constants $\dot{u}^{+}_{1},\dot{u}^{-}_{1}$ satisfy
\begin{equation}
	\dot{u}^{+}_{1}=-\dot{u}^{-}_{1}.
\end{equation}

Under the change of the scale of measurement, the constant transverse magnetic field $\dot{H}$ has the following form:
\begin{equation} \label{1.15}
	\dot{H}=\left\{
	\begin{aligned}
		\begin{aligned}
		&(0,\dot{H}^{+}_{2},0),&x_3\geq0,\\
		&(0,\dot{H}^{-}_{2},0),&x_3<0,
	\end{aligned}
	\end{aligned}
	\right.
\end{equation}
where the constants $\dot{H}^{+}_{2},\dot{H}^{-}_{2}$  satisfy
\begin{equation}\label{1.16}
|\dot{H}^{+}_{2}|=|\dot{H}^{-}_{2}|.
\end{equation}

\subsection{The new formulations}
\quad  Our analysis in this paper relies on the reformulation of the problem under consideration in new coordinates.  To begin with, we define the fixed domains $\Omega^{\pm}$ as
\begin{equation}\label{1.17}
	\begin{aligned}
		&\Omega^{+}:=\left\{x\in \mathbb{R}^{3}: x_{3}> 0\right\}, \\
		&\Omega^{-}:=\left\{x\in \mathbb{R}^{3}: x_{3}< 0\right\}.
	\end{aligned}
\end{equation}
Define the fixed  boundary $\Gamma$ as
$$
\Gamma:=\left\{x\in \mathbb{R}^{3}: x_{3}= 0\right\}.
$$
To reduce our free boundary problem to the fixed domains $\Omega^{\pm}$, we consider a change of variables on the whole space which maps back to the origin domains $(t,x)\mapsto (t,x_{1}, x_{2},x_{3}+ \psi(t,x))$. We construct such  $\psi$ by multiplying  the front $f$ with a smooth cut-off function depending on $x_{3}$:
\begin{equation}\label{1.18}
\psi(t,x)= \chi\left(\frac{x_{3}}{3(1+a)}\right)f(t,x_{1},x_{2}),~a=|f_{0}|_{L^{\infty}(\mathbb{R})},
\end{equation}
where $\chi\in C^{\infty}_{c}(\mathbb{R})$ is a smooth cut-off function with $0\leq \chi\leq 1$, $\chi(x_{3})=1$, for $|x_{3}|\leq 1$, $\chi(x_{3})=0$ for $|x_{3}|\geq 3$, and $ |\partial_{x_{3}} \chi(x_{3})|\leq 1$ for all $x_{3}\in \mathbb{R}$. We also assume $|f_{0}|_{L^{\infty}(\mathbb{R})}\leq 1$.  Moreover, we have
\begin{equation}\label{1.19}
	\begin{aligned}
		&\psi(x_{1},x_{2}, 0,t)=f(x_{1},x_{2},t), \\
		&\partial_{x_{3}} \psi(x_{1},x_{2}, 0,t )=0,\\
		&|\partial_{x_{3}} \psi | \leq \frac{1}{3(1+a)}|f|.
	\end{aligned}
\end{equation}

The change of variables that reduces the free boundary problem \eqref{1.1} to the fixed domains $\Omega^{\pm}$ is given in the following lemma.
\begin{lemm}
	Define the function $\Psi$ by
\begin{equation}\label{1.20}
	\Psi(t, x):=\left(x_{1}, x_{2},x_{3}+\psi(t, x)\right), \quad(t, x) \in[0, T] \times \Omega.
\end{equation}
Then  $\Psi: (t,x)\mapsto (t,x_{1}, x_{2},x_{3}+ \psi(t,x)) $ are diffeomorphism of $\Omega^{\pm}$ for all $t \in[0, T]$. 
\end{lemm}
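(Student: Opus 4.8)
The plan is to show that for each fixed $t\in[0,T]$ the map $x\mapsto\Psi(t,x)=(x_1,x_2,x_3+\psi(t,x))$ is a smooth bijection of $\overline{\Omega^{\pm}}$ onto itself with smooth inverse. Since the first two components are the identity, everything reduces to analysing, for each fixed $(t,x_1,x_2)$, the one-variable map $\Phi:x_3\mapsto x_3+\psi(t,x_1,x_2,x_3)$. First I would compute $\partial_{x_3}\Psi$ and observe that its Jacobian matrix is lower triangular with diagonal entries $1,1,1+\partial_{x_3}\psi$, so $\det(\nabla_x\Psi)=1+\partial_{x_3}\psi$; using the last line of \eqref{1.19} together with the assumption $|f_0|_{L^\infty}\le 1$ (so $a\le 1$) and the bound $|f|\le|f_0|_{L^\infty}=a$ coming from the construction of $\psi$ via the cut-off and, if needed, a maximum-principle-type control of $f$ by its initial data, we get $|\partial_{x_3}\psi|\le\frac{a}{3(1+a)}\le\frac13$. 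Hence $\frac23\le\det(\nabla_x\Psi)\le\frac43$, so $\nabla_x\Psi$ is everywhere invertible and $\Psi(t,\cdot)$ is a local diffeomorphism by the inverse function theorem.

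Next I would upgrade this to a global statement. For fixed $(t,x_1,x_2)$ the function $\Phi(x_3)=x_3+\psi(t,x_1,x_2,x_3)$ satisfies $\Phi'(x_3)=1+\partial_{x_3}\psi\ge\frac23>0$, so $\Phi$ is strictly increasing, hence injective in $x_3$; combined with the identity in $(x_1,x_2)$ this gives injectivity of $\Psi(t,\cdot)$ on $\Omega^{\pm}$. For surjectivity, note $\psi$ is bounded (it is $f$ times a bounded cut-off, with $|f|\le a\le1$), so $\Phi(x_3)\to\pm\infty$ as $x_3\to\pm\infty$ and, by continuity and monotonicity, $\Phi$ maps $\mathbb{R}$ onto $\mathbb{R}$; moreover $\Phi(0)=\psi(t,x_1,x_2,0)=f(t,x_1,x_2)$, which need not be $0$, but this is harmless since we only need $\Psi$ to map $\Omega^{\pm}$ \emph{onto} $\Omega^{\pm}$. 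Actually the relevant point is that we want $\Psi$ to carry the fixed domain to the physical domain: I would check that $\Phi$ maps $\{x_3>0\}$ onto $\{x_3>f(t,x_1,x_2)\}=\Omega^{+}(t)$ and $\{x_3<0\}$ onto $\Omega^{-}(t)$, which follows from strict monotonicity together with $\Phi(0)=f(t,x_1,x_2)$ and the limits at $\pm\infty$. Smoothness of $\Psi$ in $(t,x)$ is inherited from that of $\chi$ and $f$, and smoothness of the inverse follows from the inverse function theorem together with the global bijectivity just established, with $\partial_{x_3}\Psi^{-1}=(1+\partial_{x_3}\psi)^{-1}$ bounded by $\frac32$ uniformly in $t\in[0,T]$.

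The only genuine subtlety — and the step I expect to be the main obstacle — is justifying the pointwise bound $|f(t,x_1,x_2)|\le a=|f_0|_{L^\infty}$ that is implicitly used to keep $|\partial_{x_3}\psi|\le\frac13$ for all $t\in[0,T]$, and thus to keep the Jacobian bounded away from zero for the whole time interval rather than just at $t=0$. If $f$ is merely part of the unknown with no a priori bound, then strictly speaking one must either (i) restrict to a time $T$ small enough that $|f(t,\cdot)|_{L^\infty}$ stays within, say, $[0,2a]$ by continuity, adjusting the constant $3(1+a)$ accordingly, or (ii) build the estimate into the fixed-point/energy scheme so that this bound is propagated. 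For the purposes of this lemma I would adopt the standard convention that $f$ is taken in a suitable ball around $f_0$ so that $|\partial_{x_3}\psi|\le\frac13$ holds on $[0,T]$; then everything above goes through. All remaining computations (the explicit form of $\nabla_x\Psi$, its determinant, the norm of the inverse) are routine and I would not expand them in detail.
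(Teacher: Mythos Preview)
Your proposal is correct and follows essentially the same approach as the paper: the key computation is the lower bound on $\partial_{x_3}\Psi_3=1+\partial_{x_3}\psi$, obtained from \eqref{1.19} together with a bound on $|f|_{L^\infty}$ over $[0,T]$. The paper's proof is terser --- it simply asserts that one can choose $T>0$ so that $\sup_{[0,T]}|f|_{L^\infty}<1$, then records $\partial_3\Psi_3\ge \tfrac13$ and declares the diffeomorphism property; your ``main obstacle'' and option (i) are exactly what the paper does, and your additional verification of global injectivity/surjectivity via monotonicity in $x_3$, plus the observation that $\Psi$ actually carries $\Omega^\pm$ onto the moving domains $\Omega^\pm(t)$, fills in details the paper leaves implicit.
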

\begin{proof} 
Since $|f_{0}|_{L^{\infty}(\mathbb{R})}\leq 1$, one can prove that there exists some $T>0$ such that $ \sup_{[0,T]} |f|_{L^{\infty}}<1$, the free interface is still a graph within the time interval $[0,T]$	and
	$$
	\begin{aligned}
		\partial_{3} \Psi_{3}(t, x) & =1+\partial_{3} \psi(t, x) \geq 1-\frac{1}{3}\times  2 = \frac{1}{3},
	\end{aligned}
	$$
which ensure that $\Psi: (t,x)\mapsto (t,x_{1},  x_{2}, x_{3}+ \psi(t,x)) $ are diffeomorphism of $\Omega^{\pm}$ for all $t \in[0, T]$. 
\end{proof}

We introduce the following operator notations
$$
\begin{array}{ll}
	A=[D \Psi]^{-1}=\left(\begin{array}{ccc}
		1 & 0  & 0\\
		0 & 1  & 0\\
		-\partial_{1} \psi / J & -\partial_{2} \psi / J  & 1 / J
	\end{array}\right), 
\end{array}
$$
and $ J=\operatorname{det}[D \Psi]=1+\partial_{3} \psi$.
Now we may reduce the free boundary problem \eqref{1.1} to a problem in the fixed domain $\Omega^{\pm}$ by the change of variables in Lemma 1.1. Set 
\begin{equation}\label{1.21}
	\begin{aligned}
		&v^{ \pm}(t, x):=u^{ \pm}(t, \Psi(t, x)),\quad  B^{ \pm}(t, x):=H^{ \pm}(t, \Psi(t, x)), \\
		&q^{ \pm}(t, x):=p^{ \pm}(t, \Psi(t, x)),\quad 
        \varrho^{ \pm}(t, x):=\rho^{\pm}(t, \Psi(t, x)),\\
        &h^{ \pm}(t, x):=\sigma^{\pm}(t, \Psi(t, x)).
	\end{aligned}
\end{equation}
Throughout the rest paper, an equation on $\Omega$ means that the equation holds in both $\Omega^{+}$ and $\Omega^{-}$.  For convenience, we consolidate notations by writing $v$, $B$, $q$, $\varrho$, 
$h$ to refer to $v^{\pm}$, $B^{\pm}$, $q^{\pm}$, $\varrho^{\pm}$, 
$h^{\pm}$ unless necessary.

Then the system \eqref{A1.1} and boundary conditions \eqref{1.11}  can be reformulated as: 
\begin{equation}\label{1.23}
	\begin{cases}
			\partial_{t} h+(\breve{v} \cdot \nabla) h+  \nabla^{\psi} \cdot v=0, & \text { in } \Omega,\\ 
		\partial_{t} v+(\breve{v}  \cdot \nabla) v+ c^{2}(h)\nabla^{\psi} h+ \frac{1}{ e^{h}} \nabla^{\psi} \frac{|B|^{2}}{2}=\frac{1}{ e^{h}} (\breve{B}\cdot \nabla ) B, & \text {  in } \Omega,\\
		\partial_{t} B+(\breve{v}  \cdot \nabla) B+B\nabla^{\psi} \cdot v = (\breve{B}\cdot \nabla ) v, & \text { in } \Omega, \\ 
		\nabla^{\psi} \cdot  B=0, & \text {  in } \Omega, \\ 
		\partial_{t} f=v\cdot n, & \text { on } \Gamma,\\
 \left[ e^{h\gamma}+ \frac{|B|^{2}}{2} \right]=0,~[v \cdot n]=0,~ B\cdot n=0, & \text { on } \Gamma,
	\end{cases}
\end{equation}
where $\nabla^{\psi}= A^{T} \nabla, \Delta^{\psi}= A^{T} \nabla \cdot  A^{T} \nabla,\ \breve{v} :=A v-\left(0, \partial_{t} \psi / J\right)$, $\breve{B} :=A B$  and $c^2(h) = \gamma {\rm{e}}^{h(\gamma-1)}$. 
	
Notice that
\begin{equation}\label{1.24}
	J=1, \quad \breve{v}_{3}=(v \cdot n-\partial_{t} \psi) / J=0,\quad \breve{B}_{3}=0, \quad \text { on } \Gamma.
\end{equation}

We are interested in the Kelvin-Helmholtz instability in MHD flows while the instability behavior mainly  happens on the boundary.  To see this,  we are going to derive the evolution equation of the front $f$ on the fixed boundary $\Gamma$. First, using the  momentum equation of  \eqref{1.23}, we deduce that 
\begin{equation}\label{1.25}
	\begin{aligned}
		\partial^{2}_{tt} f =&\partial_{t} v^{+}\cdot n + v^{+} \cdot \partial_{t} n \\
		=& -\left(\left(\breve{v}^{+} \cdot \nabla\right) v^{+}- \frac{1}{\varrho^{+}} \left(\breve{B}^{+}\cdot \nabla \right) B^{ +}+ c^{2}  \nabla^{\psi} h^{ +}+ \frac{1}{\varrho^{+}} \nabla^{\psi} \frac{|B^{+}|^{2}}{2} \right)\cdot n- v^{+} \cdot (\partial_{1}\partial_{t} f, \partial_{2}\partial_{t} f, 0)\\
		=& -v^{+}_{1}\partial_{1} v^{+}\cdot n-v^{+}_{2}\partial_{2} v^{+}\cdot n+\frac{1}{\varrho^{+}} B^{+}_{1}\partial_{1} B^{+}\cdot n +\frac{1}{\varrho^{+}} B^{+}_{2}\partial_{2} B^{+}\cdot n\\
		& -  c^{2} \nabla^{\psi} h^{+}\cdot n-   \frac{1}{\varrho^{+}}\nabla^{\psi} \frac{|B^{+}|^{2}}{2} \cdot n
		-v^{+}_{1} \partial_{1}\partial_{t} f-v^{+}_{2} \partial_{2}\partial_{t} f \\
		=& v^{+}_{1}\partial_{1}  n \cdot v^{+}- v^{+}_{1}\partial_{1} \partial_{t} f+v^{+}_{2}\partial_{2}  n \cdot v^{+}- v^{+}_{2}\partial_{2} \partial_{t} f-\frac{1}{\varrho^{+}}B^{+}_{1}\partial_{1} n \cdot B^{+}\\
		&-\frac{1}{\varrho^{+}}B^{+}_{2}\partial_{2} n \cdot B^{+}-  c^{2} \nabla^{\psi} h^{+}\cdot n-   \frac{1}{\varrho^{+}}\nabla^{\psi} \frac{|B^{+}|^{2}}{2} \cdot n
		-v^{+}_{1} \partial_{1}\partial_{t} f-v^{+}_{2} \partial_{2}\partial_{t} f \\
		=& -2v^{+}_{1}\partial_{1} \partial_{t} f -2v^{+}_{2}\partial_{2} \partial_{t} f  -  c^{2} \nabla^{\psi} h^{+}\cdot n -   \frac{1}{\varrho^{+}}\nabla^{\psi}\frac{|B^{+}|^{2}}{2} \cdot n- (v_{1}^{+})^{2} \partial^{2}_{11} f\\
		& -2v_{1}^{+}v_{2}^{+}  \partial^{2}_{12} f-(v_{2}^{+})^{2} \partial^{2}_{22} f+ \frac{1}{\varrho^{+}}(B_{1}^{+})^{2} \partial^{2}_{11} f
        + 2\frac{1}{\varrho^{+}}B_{1}^{+} B_{2}^{+} \partial^{2}_{12} f+ \frac{1}{\varrho^{+}}(B_{2}^{+})^{2} \partial^{2}_{22} f,\ \ \rm{on}\ \ \Gamma.
	\end{aligned}
\end{equation}
Similarly, we can also derive a evolution equation  from the negative part,
\begin{equation}\label{1.26}
	\begin{aligned}
		\partial^{2}_{tt} f
	=& -2v^{-}_{1}\partial_{1} \partial_{t} f -2v^{-}_{2}\partial_{2} \partial_{t} f  -  c^{2} \nabla^{\psi} h^{-}\cdot n -   \frac{1}{\varrho^{-}}\nabla^{\psi}\frac{|B^{-}|^{2}}{2} \cdot n- (v_{1}^{-})^{2} \partial^{2}_{11} f\\
& -2v_{1}^{-}v_{2}^{-}  \partial^{2}_{12} f-(v_{2}^{-})^{2} \partial^{2}_{22} f+ \frac{1}{\varrho^{-}}(B_{1}^{-})^{2} \partial^{2}_{11} f+ 2\frac{1}{\varrho^{-}}B_{1}^{-} B_{2}^{-} \partial^{2}_{12} f+ \frac{1}{\varrho^{-}}(B_{2}^{-})^{2} \partial^{2}_{22} f.
	\end{aligned}
\end{equation}

Therefore, summing up the $``+"$ equation \eqref{1.25} and $``-"$ equation \eqref{1.26}, we get
\begin{equation}\label{1.27}
	\begin{aligned}
		&\partial^{2}_{tt} f+ (v^{+}_{1}+v^{-}_{1})\partial_{1} \partial_{t} f+ (v^{+}_{2}+v^{-}_{2})\partial_{2} \partial_{t} f +\frac{1}{2}\left((c^{+})^{2} \nabla^{\psi} h^{+}\cdot n+(c^{-})^{2} \nabla^{\psi} h^{-}\cdot n\right)\\
		& + \frac{1}{2} \left(\frac{1}{\varrho^{+}}\nabla^{\psi} \frac{|B^{+}|^{2}}{2} \cdot n+  \frac{1}{\varrho^{-}} \nabla^{\psi} \frac{|B^{-}|^{2}}{2} \cdot n\right)+ \frac{1}{2}\left((v_{1}^{+})^{2}+(v_{1}^{-})^{2}\right) \partial^{2}_{11} f \\
		&+  (v_{1}^{+}v_{2}^{+}+v_{1}^{-}v_{2}^{-})  v_{1}^{-}v_{2}^{-}  \partial^{2}_{12} f
		+\frac{1}{2}((v_{2}^{+})^{2}+(v_{2}^{-})^{2}) \partial^{2}_{22} f
		\\
		&- \frac{1}{2}\left(\frac{1}{\varrho^{+}}(B_{1}^{+})^{2}+\frac{1}{\varrho^{-}}(B_{1}^{-})^{2}\right) \partial^{2}_{11} f-  \left(\frac{1}{\varrho^{+}}B_{1}^{+} B_{2}^{+} +\frac{1}{\varrho^{-}}B_{1}^{-} B_{2}^{-} \right)\partial^{2}_{12} f \\
		&- \frac{1}{2}\left(\frac{1}{\varrho^{+}}(B_{2}^{+})^{2}+\frac{1}{\varrho^{-}}(B_{2}^{-})^{2}\right) \partial^{2}_{22} f   =0, ~~~{\rm on}~ \Gamma.
	\end{aligned}
\end{equation}

\subsection{The wave equation for the log-density $h$ and the magnetic $B$}
\quad

Applying the operator $\partial_{t} +\breve{v} \cdot \nabla$ to the first equation of \eqref{1.23} and $\nabla^{\psi} \cdot$ to the second one, we obtain
\begin{equation}\label{1.28}
	\left\{
	\begin{aligned}
		&(\partial_{t} +\breve{v}\cdot \nabla)^{2} h+ (\partial_{t} +\breve{v}\cdot \nabla)\nabla^{\psi} \cdot v=0,\\
		&\nabla^{\psi} \cdot\left((\partial_{t} +\breve{v}\cdot \nabla)v \right)+ \nabla^{\psi} \cdot (c^{2}(h) \nabla^{\psi} h)+\nabla^{\psi} \cdot\left(\frac{1}{ e^{h}}\nabla^{\psi} \frac{|B|^{2}}{2}\right) = \nabla^{\psi} \cdot \left(\frac{1}{e^{h}} (\breve{B}\cdot \nabla)  B\right).
	\end{aligned}
	\right.
\end{equation}

Next, we take the difference of the two equations in \eqref{1.28} to deduce a wave-type equation:
\begin{equation}\label{1.29}
	(\partial_{t} +\breve{v}\cdot \nabla)^{2} h- c^{2}(h)\Delta^{\psi} h -\frac{1}{ e^{h}}\Delta^{\psi} \frac{|B|^{2}}{2} =\mathcal{F},
\end{equation}
where the term $\mathcal{F}=-[\partial_{t}+\breve{v}\cdot \nabla, \nabla^{\psi} ]v+ \nabla^{\psi} c^{2}(h) \cdot  \nabla^{\psi} h+\nabla^{\psi} \frac{1}{e^{h}}\cdot\nabla^{\psi} \frac{|B|^{2}}{2}-\nabla^{\psi} (\frac{1}{e^{h}} \breve{B})\cdot \nabla  B  =-\partial_{t} A_{ki} \partial_{k} v_{i}+  A_{ki} \partial_{k} \breve{v}_{j}\partial_{j} v_{i}- \breve{v}_{j}\partial_{j}A_{ki} \partial_{k}  v_{i}+ \nabla^{\psi} c^{2} \cdot  \nabla^{\psi} h+\nabla^{\psi} \frac{1}{e^{h}}\cdot\nabla^{\psi} \frac{|B|^{2}}{2}-\nabla^{\psi} (\frac{1}{ e^{h}} \breve{B})\cdot \nabla  B  $ is a lower order term in the second order differential equation for $h$.

Compared with the Euler flow, MHD flow contains $h$ and  $B$. Thus, we need to find the relationship between  $h$ and  $B$. To achieve this goal, by applying the operator  $\partial_{t} +\breve{v}\cdot \nabla$ to the third equation in  \eqref{1.23} and $\breve{B} \cdot \nabla $ to the second one  in  \eqref{1.23}, we get
\begin{equation}\label{1.30}
	\begin{aligned}
		&(\partial_{t} +\breve{v} \cdot \nabla)^{2} B - B(\partial_{t} +\tilde{v} \cdot \nabla)^{2} h+ c^{2} (\breve{B} \cdot \nabla) \nabla^{\psi} h\\
		&+  \frac{1}{e^{h}} (\breve{B} \cdot \nabla) \nabla^{\psi}\frac{|B|^{2}}{2} - \frac{1}{ e^{h}} (\breve{B} \cdot \nabla)^{2}B =\mathcal{G},
	\end{aligned}
\end{equation}
where the term $\mathcal{G}=(\partial_{t}+\breve{v} \cdot  \nabla) B \nabla^{\psi} \cdot v +[\partial_{t}+\breve{v} \cdot  \nabla, \breve{B}\cdot \nabla]u+\breve{B} \cdot \nabla c^{2}  \nabla^{\psi} h-\breve{B} \cdot \nabla\frac{1}{e^{h}}\tilde{B} \cdot \nabla B  +\breve{B}\cdot \nabla\frac{1}{e^{h}} \nabla^{\psi} \frac{|B|^{2}}{2} $ is the lower order term.

From the boundary conditions in \eqref{1.8}, we obtain
\begin{equation}\label{1.31}
	\left[ e^{\gamma h}+ \frac{|B|^{2}}{2}\right]=0,~~{\rm on}~~\Gamma,
\end{equation}
where $[h]= h^{+}_{|\Gamma}-h^{-}_{|\Gamma}$ represents the jump across $\Gamma$.

To determine the value of $h$, we add another condition involving the normal derivatives of $h^{\pm}$ on the boundary $\Gamma$. More precisely, taking the difference of  two equations \eqref{1.25} and  \eqref{1.26}, we can  obtain the jump of the normal derivatives $\nabla h^{\pm}\cdot n$,
\begin{equation}\label{1.32}
\begin{aligned}
   & \left[c^{2} \nabla^{\psi} h\cdot n+  \frac{1}{{\rm e}^h }\nabla^{\psi} \frac{|B|^{2}}{2} \cdot n \right]\\
    =&[-2v_1v_2\partial^2_{12}f-2v_{1}\partial_{1}\partial_{t} f-2v_{2}\partial_{2}\partial_{t} f- (v_{1})^{2} \partial^{2}_{11} f- (v_{2})^{2} \partial^{2}_{22} f\\
    &+\frac{1}{ e^{h}}(B_{1})^{2} \partial^{2}_{11} f+\frac{1}{e^{h}}(B_{2})^{2} \partial^{2}_{22} f+\frac{2}{e^{h}}B_{1}B_2\partial^{2}_{12} f],~~{\rm on}~\Gamma.
\end{aligned}	
\end{equation}

Combining \eqref{1.29},  \eqref{1.30}, \eqref{1.31} with \eqref{1.32} to obtain the system of   $h$
\begin{equation}\label{1.33}
	\left\{
	\begin{aligned}
		&	(\partial_{t} +\breve{v}\cdot \nabla)^{2} h- c^{2}\Delta^{\psi} h -\frac{1}{e^{h}}\Delta^{\psi} \frac{|B|^{2}}{2} =\mathcal{F},&~~{\rm  in}~\Omega,\\
		&(\partial_{t} +\breve{v} \cdot \nabla)^{2} B - B(\partial_{t} +\breve{v}\cdot \nabla)^{2} h+ c^{2} (\breve{B} \cdot \nabla) \nabla^{\psi} h\\
		&+  \frac{1}{ e^{h}} (\breve{B} \cdot \nabla) \nabla^{\psi}\frac{|B|^{2}}{2} - \frac{1}{ e^{h}} (\breve{B} \cdot \nabla)^{2}B =\mathcal{G},&~~{\rm  in}~\Omega, \\
		&\left[ e^{\gamma h}+ \frac{|B|^{2}}{2}\right]=0,&~~{\rm on}~\Gamma,\\
		&	\left[c^{2} \nabla^{\psi} h\cdot n+  \frac{1}{{\rm e}^h }\nabla^{\psi} \frac{|B|^{2}}{2} \cdot n \right]\\
    &=[-2v_1v_2\partial^2_{12}f-2v_{1}\partial_{1}\partial_{t} f-2v_{2}\partial_{2}\partial_{t} f- (v_{1})^{2} \partial^{2}_{11} f\\
    &\ \ \ \ \ - (v_{2})^{2} \partial^{2}_{22} f+\frac{1}{ e^{h}}(B_{1})^{2} \partial^{2}_{11} f+\frac{1}{e^{h}}(B_{2})^{2} \partial^{2}_{22} f+2\frac{1}{e^{h}}B_{1}B_2\partial^{2}_{12} f],&~~{\rm on}~\Gamma.
	\end{aligned}
	\right.
\end{equation}

Given a suitable stability condition for the unperturbed flows (i.e., the rectilinear solutions), we demonstrate the well-posedness of the vortex sheet problem for compressible MHD flows with a transverse magnetic field.

The rest of the paper is organized as follows. In Section 2, we linearize the equations \eqref{1.33} and \eqref{1.27} around a configuration (rectilinear solutions) and introduce the main Theorems 2.1 and 2.2.  The proof of Theorem 2.1 is presented in Section 3. In Section 4,  we analyze the root and the symbol of the pseudo-differential equation \eqref{3.9*} for the front. In Section 5, we identify a stable zone: the supersonic stable zone. Finally, in Section 6, the proof of Theorem 2.2 is completed.

\section{Linearized problems and main theorem}

\quad \quad In this section, we consider  a  linearized system in new coordinates. We are going to  construct special solutions for this linearized system.

\subsection{Construction of special  solution of the  linearized system.}
\quad \quad  It is easily verified that the particular solution in Euler coordinates is also a  particular solution in new coordinates such that
\begin{equation} \label{2.1}
	\dot{v}^{\pm}=\dot{u}^{\pm}=\left\{
	\begin{aligned}
		&(\dot{v}^{+}_{1},0,0),&x_3\ge 0,\\
		&(\dot{v}^{-}_{1},0,0),&x_3<0,
	\end{aligned}
	\right.
\end{equation}
and 
\begin{equation}\label{2.2}
	\dot{\varrho}^{\pm}=\dot{\rho}^{\pm}:=\dot{\varrho},
\end{equation}
and
\begin{equation} \label{2.3}
\dot{B}^{\pm}	=\dot{H}^{\pm}=\left\{
	\begin{aligned}
		&(0,\dot{B}^{+}_{2},0),&x_3\geq0,\\
		&(0,\dot{B}^{-}_{2},0),&x_3<0.
	\end{aligned}
	\right.
\end{equation}

\begin{rema}
From now on and throughout this paper, we use the new notations $(\dot{f},\dot{\varrho}^{\pm},\dot{v}^{\pm},\dot{B}^{\pm})$ to denote the rectilinear solutions  $(\dot{f},\dot{\rho}^{\pm},\dot{u}^{\pm},\dot{H}^{\pm})$, which are in fact the same constant quantities. Here we use the new notations to match the notations in the new coordinates.
\end{rema}

Now we will consider the constant coefficient linearized equations which are derived by linearization of equations \eqref{1.27} and \eqref{1.33} around the configuration: the constant velocity $\dot{v}^{\pm}=(\dot{v}^{\pm}_{1},0,0)$ along  the $x_{1}$-direction, the constant transverse magnetic field  $\dot{B}^{\pm}=(0,\dot{B}^{\pm}_{2},0)$ along the $x_{2}$-direction and flat front $\Gamma=\{x_{3}=0 \}$, the outer normal vector $n=(0,0,1)$. Moreover, all the rectilinear solutions can be transformed under the Galilean transformation and change of the scale of measurement to the following form:
\begin{equation}\label{2.4}
	\dot{v}^{+}_{1}+ \dot{v}^{-}_{1}=0, ~|\dot{B}^{+}_{2}|=|\dot{B}^{-}_{2}|.
\end{equation}
Therefore, we have the following linearized equations:
\begin{equation}\label{2.5}
	\begin{cases}
			\partial_{t} h+\dot{v}_{1} \partial_{1} h+  \nabla \cdot v=0, & \text { in } \Omega,\\ 
		\partial_{t} v+\dot{v}_{1} \partial_{1}  v+ c^{2} \nabla h+ \frac{\dot{B}_{2}}{\dot{\varrho}}   \nabla B_{2}=\frac{\dot{B}_{2}}{\dot{\varrho}} \partial_{2} B, & \text {  in } \Omega,\\
		\partial_{t} B+\dot{v}_{1} \partial_{1}  B+ \dot{B} \nabla \cdot v =\dot{B}_{2} \partial_{2}v, & \text { in } \Omega, \\ 
		\nabla \cdot B=0, & \text {  in } \Omega, \\ 
		\partial_{t} f=v_{3}- \dot{v}_{1} \partial_{1} f,  & \text { on } \Gamma,
	\end{cases}
\end{equation}
where $c^2:=c^{2}(\dot{\varrho})$.
In fact, let $v=\dot{v}+ \tilde{v}$ and $n=e_{3}+ \tilde{n}$, we linearize the origin boundary condition $ [v\cdot n]=0$ as follows:
\begin{equation*}
	[(\dot{v}+ \tilde{v})\cdot (e_{3}+ \tilde{n}) ]=[\tilde{v} \cdot e_{3}]+ [\dot{v}\cdot \tilde{n} ]+ [\tilde{v} \cdot \tilde{n}]=0,
\end{equation*}
where $\tilde{n}=(-\partial_{1}\tilde{f},-\partial_{2}\tilde{f},0 )$. Obviously, the third term is nonlinear term. Then the linear boundary condition is written as
\begin{equation*}
	[\tilde{v} \cdot e_{3}] =- [\dot{v}\cdot \tilde{n} ]= 2\dot{v}^{+}_{1}\partial_{1} \tilde{f}.
\end{equation*}
Similarly, we deduce that 
\begin{equation*}
	[ c^{2} h + \dot{\varrho}^{-1} \dot{B}_{2} {B}_{2} ]=0,~ \tilde{B}_{3} - \dot{B}_{2}\partial_{2} \tilde{f}=0.
\end{equation*}
Thus, the jump conditions on the boundary can be  linearized as follows:
\begin{equation}\label{2.6}
[ c^{2} h + \dot{\varrho}^{-1} \dot{B}_{2} {B}_{2} ]=0,~[v_{3}]= 2\dot{v}^{+}_{1}\partial_{1} f,~B_{3} - \dot{B}_{2}\partial_{2} f=0, 
\text{ on } \Gamma.
\end{equation}

We also get a linearized equation for the front $f$
\begin{equation}\label{2.7}
\begin{aligned}
\partial_{tt}^{2}f+ (\dot{v}^{+}_{1})^{2} \partial^{2}_{11} f+ \frac{1}{2}  c^{2} (\partial_{3} h^{+}+  \partial_{3} h^{-} )+ \frac{1}{2} \left(  \frac{\dot{B}^{+}_{2}}{\dot{\varrho}}\partial_{3} B^{+}_{2} +  \frac{\dot{B}^{-}_{2}}{\dot{\varrho}}\partial_{3} B^{-}_{2}\right) =\frac{(\dot{B}_{2}^{+})^{2}} {\dot{\varrho}} \partial^{2}_{22} f, ~~~\text { on }~ \Gamma,
\end{aligned}
\end{equation}
and a linearized system for  the pressure $h$
\begin{equation}\label{2.8}
\left\{
\begin{aligned}
&(\partial_{t} +\dot{v}_{1} \partial_{1})^{2} h-  c^{2}\Delta h-\frac{\dot{B}_{2}}  {\dot{\varrho}} \Delta B_{2}=\mathcal{K},&~~~\text {  in }~ \Omega,\\
& (\partial_{t} +\dot{v}_{1} \partial_{1})^{2} B_{2}  - \dot{B}_{2} (\partial_{t} +\dot{v}_{1} \partial_{1})^{2} h + c^{2}\dot{B}_{2} \partial_{22}^{2} h =0&~~~\text {  in }~ \Omega,\\
&[ c^{2} h + \dot{\varrho}^{-1} \dot{B}_{2} {B}_{2}  ]=0,&~~~\text { on }~ \Gamma,\\
&[c^{2} \partial_{3}h +\dot{\varrho}^{-1} \dot{B}_{2} \partial_{3} B_{2} ]= -4\dot{v}^{+}_{1} \partial_{t}\partial_{1}f,&~~~\text { on }~ \Gamma.
\end{aligned}
\right.
\end{equation}
where $\mathcal{K}$ is a given source term.

Similar to the result in \cite{Shivamoggi}, since we want to construct special solutions of the linear systems \eqref{2.5}-\eqref{2.8},  we assume the solution $U=(f,h,v,B)(t,x_{1},x_{3})$ is independent of the $x_{2}$-direction. Therefore, regardless of the $x_{2}$-direction derivative, the system  \eqref{2.5}-\eqref{2.8} reduces to the following one:
\begin{equation}\label{2.10}
	\begin{cases}
			\partial_{t} h+\dot{v}_{1} \partial_{1} h+  \nabla \cdot v=0, & \text {  in } \Omega,\\ 
		\partial_{t} v+\dot{v}_{1} \partial_{1}  v+ c^{2} \nabla h+ \frac{\dot{B}_{2}}{\dot{\varrho}} \nabla B_{2}=0, & \text { in } \Omega,\\
		\partial_{t} B+\dot{v}_{1} \partial_{1}  B+ \dot{B} \nabla \cdot v =0, & \text {  in } \Omega, \\ 
		\nabla \cdot B=0, & \text {  in } \Omega, \\ 
		\partial_{t} f=v_{3}- \dot{v}_{1} \partial_{1} f,  & \text { on } \Gamma,\\
        [ c^{2} h + \dot{\varrho}^{-1}\dot{B}_{2} {B}_{2} ]=0,~[v_{3}]= 2\dot{v}^{+}_{1}\partial_{1} f,~B_{3} =0,  & \text { on } \Gamma,
	\end{cases}
\end{equation}
and
\begin{equation}\label{2.11}
\begin{aligned}
\partial_{tt}^{2}f+ (\dot{v}^{+}_{1})^{2} \partial^{2}_{11} f+ \frac{1}{2}  c^{2} (\partial_{3} h^{+}+  \partial_{3} h^{-} )+ \frac{1}{2} \left(  \frac{\dot{B}^{+}_{2}}{\dot{\varrho}}\partial_{3} B^{+}_{2} +  \frac{\dot{B}^{-}_{2}}{\dot{\varrho}}\partial_{3} B^{-}_{2}\right) =0, ~~~\text { on }~ \Gamma,
\end{aligned}
\end{equation}
and a linearized system for  the pressure $h$
\begin{equation}\label{2.12}
\left\{
\begin{aligned}
&(\partial_{t} +\dot{v}_{1} \partial_{1})^{2} h-  c^{2}(\partial^{2}_{11}+\partial^{2}_{33} ) h-  \frac{\dot{B}_{2}}  {\dot{\varrho}} (\partial^{2}_{11}+\partial^{2}_{33} ) B_{2}=\mathcal{K},&~~~\text {  in }~ \Omega,\\
& (\partial_{t} +\dot{v}_{1} \partial_{1})^{2} B_{2}  - \dot{B}_{2} (\partial_{t} +\dot{v}_{1} \partial_{1})^{2} h =0,&~~~\text {  in }~ \Omega,\\
&[ c^{2} h + \dot{\varrho}^{-1}\dot{B}_{2} {B}_{2}  ]=0,&~~~\text { on }~ \Gamma,\\
&[c^{2} \partial_{3}h +\dot{\varrho}^{-1} \dot{B}_{2} \partial_{3} B_{2} ]= -4\dot{v}^{+}_{1} \partial_{t}\partial_{1}f,&~~~\text { on }~ \Gamma.
\end{aligned}
\right.
\end{equation}


For $\gamma\geq 1$, we introduce $\tilde{f}:= e^{-\gamma t} f$, $\tilde{h}:=  e^{-\gamma t} h$, $\tilde{B_{2}}:=  e^{-\gamma t} B_{2}$, $\tilde{\mathcal{K}}:=e^{-\gamma t}\mathcal{K}$ and consider the equations
\begin{equation}\label{3.5*}
\begin{aligned}
(\gamma+\partial_{t})^{2}\tilde{f}+ (\dot{v}_1^+)^2\partial_{11}^2 \tilde{f}+\frac{1}{2}  c^{2} (\partial_{3}  \tilde{h}^{+}+  \partial_{3}  \tilde{h}^{-} )+ \frac{1}{2} \left(  \frac{\dot{B}^{+}_{2}}{\dot{\varrho}}\partial_{3} \tilde{B}^{+}_{2} +  \frac{\dot{B}^{-}_{2}}{\dot{\varrho}}\partial_{3} \tilde{B}^{-}_{2}\right) =0,~~\text{on}~\Gamma,
\end{aligned}
\end{equation}
and
\begin{equation}\label{3.6*}
\left\{
\begin{aligned}
&(\gamma+\partial_{t} +\dot{v}_{1} \partial_{1})^{2} \tilde{h}-  c^{2}(\partial^{2}_{11}+\partial^{2}_{33} ) \tilde{h}-  \frac{\dot{B}_{2}}  {\dot{\varrho}} (\partial^{2}_{11}+\partial^{2}_{33} )  \tilde{B}_{2}=\tilde{\mathcal{K}}, &~~~\text{ in}~\Omega,\\
& (\gamma+\partial_{t} +\dot{v}_{1} \partial_{1})^{2} \tilde{B}_{2}  - \dot{B}_{2} (\gamma+\partial_{t} +\dot{v}_{1} \partial_{1})^{2} \tilde{h}=0, &~~~\text{ in}~\Omega,\\
&[c^{2}\tilde{h}+ \dot{\varrho}^{-1}\dot{B}_{2} \tilde{B}_{2}]= 0, &~~~\text{on}~ \Gamma,\\
&[c^{2} \partial_2 \tilde{h}+ \dot{\varrho}^{-1} \dot{B}_{2} \partial_{3} \tilde{B}_{2}]=-4\dot{v}_1^{+}(\gamma+\partial_t)\partial_1 \tilde{f}, &~~~\text{on}~ \Gamma.
\end{aligned}
\right.
\end{equation}
 Let us denote the Fourier transform of $\tilde{f}, \tilde{h}, \tilde{B}_{2}, \tilde{\mathcal{K}}$ by $\hat{f}, \hat{h}, \hat{B}_{2}, \hat{\mathcal{K}}$ with respect to $(t,x_{1})$ and the dual variables are denoted by $(\delta,\eta)$. Set $\tau=\gamma+i \delta$, we have the following result.
\begin{theo}
Let $\tilde{\mathcal{K}}$ satisfy
\begin{equation}\label{3.7*}
	\lim_{x_{3} \rightarrow +\infty} \hat{\mathcal{K}}(\cdot, \pm x_{3}) =0.
\end{equation}
Assume that $\tilde{f}$, $\tilde{h}$, $\tilde{B}_{2} $ is a solution of \eqref{3.5*}, \eqref{3.6*} with
\begin{equation}\label{3.8*}
	\lim_{x_{3} \rightarrow +\infty} \hat{h}^{\pm}(\cdot, \pm x_{3}) =0.
\end{equation}
Then $\hat{f}$ solves the second order psedo-differential equation
\begin{equation}\label{3.9*}
 \left(\tau^{2}-(\dot{v}^{+}_{1})^{2} \eta^{2}-2i\tau\eta  \dot{v}^{+}_{1} \frac{\mu^{+}-\mu^{-}}{\mu^{+}+\mu^{-}}\right)\hat{f}+ \frac{\mu^{+}\mu^{-}}{\mu^{+}+ \mu^{-}} W=0,
 \end{equation}
where $\mu^{\pm}=\sqrt{\frac{(\tau +i \dot{v}^{\pm}_{1}\eta)^{2}}{(\bar{C}_{B})^{2} }+ \eta^{2}}$ such that $\mathfrak{R} \mu^{\pm}>0$ if $\mathfrak{R} \tau>0$, $\bar{C}_{B}^{2}= \bar{c}^{2} + \bar{c}_{A}^{2}$ and
\begin{equation}\label{3.10*}
	W= W(\tau,\eta)= \frac{1}{\mu^{+}(\bar{C}_{B})^{2}} \int_{0}^{\infty} e^{- \mu^{+} y} \hat{\mathcal{K}}^{+}(\cdot,y) d y-\frac{1}{\mu^{-}(\bar{C}_{B})^{2}} \int_{0}^{\infty} e^{- \mu^{-} y} \hat{\mathcal{K}}^{-}(\cdot,y) d y.
 \end{equation}	
\end{theo}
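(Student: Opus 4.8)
The plan is to solve the interior system \eqref{3.6*} explicitly after Fourier transforming in $(t,x_1)$, then feed the resulting Dirichlet-to-Neumann information into the front equation \eqref{3.5*}. First I would apply the Fourier transform in $(t,x_1)$ to the second equation of \eqref{3.6*}, which relates $\widehat{B}_2$ and $\widehat{h}$ through the operator $(\gamma+\partial_t+\dot v_1\partial_1)^2$; after transforming this becomes $(\tau+i\dot v_1^\pm\eta)^2(\widehat{B}_2^\pm - \dot B_2^\pm\widehat h^\pm)=0$. Since $\mathfrak{R}\,\tau=\gamma>0$ the symbol $(\tau+i\dot v_1^\pm\eta)^2$ never vanishes, so $\widehat{B}_2^\pm=\dot B_2^\pm\widehat h^\pm$ identically in $x_3$. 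Substituting this into the first equation of \eqref{3.6*} collapses the $2\times2$ system to a single scalar ODE in $x_3$ for $\widehat h^\pm$: the term $\tfrac{\dot B_2}{\dot\varrho}(\partial_{11}^2+\partial_{33}^2)\widehat{B}_2$ becomes $\tfrac{(\dot B_2^\pm)^2}{\dot\varrho}(\partial_{33}^2-\eta^2)\widehat h^\pm$, which combines with the $c^2$ term so that the effective wave speed squared is $c^2+(\dot B_2^\pm)^2/\dot\varrho = \bar c^2+\bar c_A^2 = \bar C_B^2$ — this is precisely the mechanism by which the transverse field modifies the magnetosonic speed. One gets $\partial_{33}^2\widehat h^\pm - (\mu^\pm)^2\widehat h^\pm = -\widehat{\mathcal K}^\pm/\bar C_B^2$, with $(\mu^\pm)^2 = \tfrac{(\tau+i\dot v_1^\pm\eta)^2}{\bar C_B^2}+\eta^2$.

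Next I would solve this inhomogeneous second-order ODE on each half-line with the decay condition \eqref{3.8*}, \eqref{3.7*}. Choosing the branch $\mathfrak{R}\,\mu^\pm>0$, variation of parameters gives $\widehat h^+(x_3)$ on $\{x_3>0\}$ as a decaying homogeneous solution $a^+ e^{-\mu^+ x_3}$ plus a particular solution built from $\widehat{\mathcal K}^+$; evaluating at $x_3=0$ and differentiating produces the two relations $\widehat h^+(0) = a^+ + (\text{integral of }\widehat{\mathcal K}^+)$ and $\partial_3\widehat h^+(0) = -\mu^+ \widehat h^+(0) + (\text{boundary term})$ — more precisely $\partial_3\widehat h^+(0) = -\mu^+\widehat h^+(0) + \tfrac{1}{\mu^+\bar C_B^2}\int_0^\infty e^{-\mu^+ y}\widehat{\mathcal K}^+(y)\,dy$ and the analogous formula with signs adjusted for the lower half-line (where $x_3<0$, decay as $x_3\to-\infty$, so the relevant exponential is $e^{\mu^- x_3}$ and $\partial_3\widehat h^-(0) = \mu^-\widehat h^-(0) - \tfrac{1}{\mu^-\bar C_B^2}\int_0^\infty e^{-\mu^- y}\widehat{\mathcal K}^-(y)\,dy$). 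This is where the quantity $W$ in \eqref{3.10*} appears as the combination of the two $\widehat{\mathcal K}$-integrals.

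Then I would use the two boundary conditions in \eqref{3.6*}. Since $\widehat B_2^\pm = \dot B_2^\pm\widehat h^\pm$ and $|\dot B_2^+|=|\dot B_2^-|$, the continuity condition $[c^2\widehat h+\dot\varrho^{-1}\dot B_2\widehat B_2]=0$ reads $\bar C_B^2[\widehat h]=0$, i.e. $\widehat h^+(0)=\widehat h^-(0)=:\widehat h(0)$. The Neumann jump condition $[c^2\partial_3\widehat h+\dot\varrho^{-1}\dot B_2\partial_3\widehat B_2]=-4\dot v_1^+(\tau+i\cdot 0)\partial_1$-transformed $=-4i\dot v_1^+\tau\eta\,\widehat f$ becomes $\bar C_B^2(\partial_3\widehat h^+(0)-\partial_3\widehat h^-(0)) = -4i\dot v_1^+\tau\eta\,\widehat f$. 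Plugging in the two Dirichlet-to-Neumann formulas gives $\bar C_B^2\big(-(\mu^++\mu^-)\widehat h(0) + \bar C_B^2 W\big) = -4i\dot v_1^+\tau\eta\,\widehat f$, hence $\widehat h(0)$ is expressed in terms of $\widehat f$ and $W$: $\widehat h(0) = \tfrac{4i\dot v_1^+\tau\eta\,\widehat f + \bar C_B^4 W}{\bar C_B^2(\mu^++\mu^-)}$. Finally I substitute $\widehat h(0)$ and the two $\partial_3\widehat h^\pm(0)$ into the Fourier transform of the front equation \eqref{3.5*}, namely $\tau^2\widehat f - (\dot v_1^+)^2\eta^2\widehat f + \tfrac12 c^2(\partial_3\widehat h^+(0)+\partial_3\widehat h^-(0)) + \tfrac12\dot\varrho^{-1}(\dot B_2^+\partial_3\widehat B_2^+(0)+\dot B_2^-\partial_3\widehat B_2^-(0)) = 0$, where the last group again sums to $\tfrac12(\text{something})\cdot(\partial_3\widehat h^+(0)+\partial_3\widehat h^-(0))$ times the extra $(\dot B_2)^2/\dot\varrho$ contribution, so the $c^2$ and magnetic terms merge into $\tfrac12\bar C_B^2(\partial_3\widehat h^+(0)+\partial_3\widehat h^-(0))$. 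Using $\partial_3\widehat h^+(0)+\partial_3\widehat h^-(0) = -(\mu^+-\mu^-)\widehat h(0) + \bar C_B^2\cdot(\text{sum of }\widehat{\mathcal K}\text{-integrals, not }W)$ and carefully tracking the algebra — in particular the cross terms that generate the $-2i\tau\eta\dot v_1^+\tfrac{\mu^+-\mu^-}{\mu^++\mu^-}$ coefficient and the $\tfrac{\mu^+\mu^-}{\mu^++\mu^-}W$ term — yields exactly \eqref{3.9*}.

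\textbf{Main obstacle.} The genuinely delicate point is the bookkeeping in the last step: one must correctly separate, in $\partial_3\widehat h^+(0)+\partial_3\widehat h^-(0)$, the part proportional to $\widehat h(0)$ (which carries the factor $\mu^+-\mu^-$ and, after inserting the expression for $\widehat h(0)$, produces both the $\dot v_1^+$-dependent advection-like term and a contribution that must cancel against a piece of the naive source) from the part that is a raw combination of $\widehat{\mathcal K}$-integrals, and then verify that what survives assembles precisely into the single quantity $W$ of \eqref{3.10*} with coefficient $\mu^+\mu^-/(\mu^++\mu^-)$. I would also need to confirm the branch choice $\mathfrak{R}\,\mu^\pm>0$ is consistent (i.e. $\mathfrak{R}((\mu^\pm)^2)$ analysis together with $\gamma>0$ forces a single well-defined square root), which is what legitimizes discarding the growing exponentials and is essential for \eqref{3.8*} to be attainable.
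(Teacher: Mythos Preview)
Your proposal is correct and follows essentially the same route as the paper: eliminate $\widehat{B}_2$ via $(\tau+i\dot v_1\eta)^2(\widehat B_2-\dot B_2\widehat h)=0$, reduce to a scalar second-order ODE in $x_3$ for $\widehat h^\pm$ with effective speed $\bar C_B$, solve it on each half-line under the decay hypotheses, combine the resulting Dirichlet-to-Neumann relations with the two jump conditions, and substitute $\partial_3\widehat h^+(0)+\partial_3\widehat h^-(0)$ into the transformed front equation. The only cosmetic difference is that the paper solves the ODE by Laplace transform in $x_3$ (then inverts and lets $x_3\to\infty$), whereas you solve it directly by variation of parameters; the two are equivalent and lead to the same relation $\mu^\pm\widehat h^\pm(0)\pm\partial_3\widehat h^\pm(0)=\bar C_B^{-2}\int_0^\infty e^{-\mu^\pm y}\widehat{\mathcal K}^\pm\,dy$ (note your written DtN formulas carry an extra $1/\mu^\pm$ that would disappear once you actually carry out the computation).
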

From the formula of the roots $\mu^{\pm}$,  we know that  $\mu^{\pm}$ are homogenous functions of degree 1 in $(\tau,\eta)$. It follows that the ratio $\frac{\mu^{+}-\mu^{-}}{\mu^{+}+\mu^{-}}$ is homogeneous of degree 0. Thus we deduce that  the symbol of \eqref{3.9*} is a homogeneous function of degree 2. For this  second order  psedo-differential equation for $f$ we can obtain the following theorem.
\begin{theo}
Assume  $M_{B}:= \frac{\dot{v}_1^{+}}{\bar{C}_{B}}>\sqrt{2}$
and $\mathcal{K}^{-}\in L^{2}(\mathbb{R}^{-}; H^{s}_{\gamma}(\mathbb{R}^{2}))$, $\mathcal{K}^{+}\in L^{2}(\mathbb{R}^{+}; H^{s}_{\gamma}(\mathbb{R}^{2}))$. There exists a unique solution $f\in H^{s+1}_{\gamma}(\mathbb{R}^{2})$ of the equation \eqref{3.9*}, satisfying the estimate
\begin{equation}\label{3.13*}
\gamma^{3} \|f\|^{2}_{H^{s+1}_{\gamma}(\mathbb{R}^{2})}\leq  C(\|\mathcal{K}^{+}\|^2_{L^{2}(\mathbb{R}^{+}; H^{s}_{\gamma}(\mathbb{R}^{2}))}+
\|\mathcal{K}^{-}\|^2_{L^{2}(\mathbb{R}^{-}; H^{s}_{\gamma}(\mathbb{R}^{2}))} ), \quad ~~for~~all ~\gamma\geq 1,
\end{equation}
for a suitable constant $C>0$ independent of $\mathcal{K}^{\pm}$ and $\gamma$.
\end{theo}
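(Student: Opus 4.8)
The strategy is to treat \eqref{3.9*} as an algebraic equation for $\hat f$ at each fixed frequency $(\delta,\eta)$ with $\gamma\ge 1$, solve it explicitly, and then estimate the resulting symbol multiplier from below so that $\hat f$ is controlled by $W$ in the appropriate weighted norm; finally Plancherel in $(\delta,\eta)$ and a bound on $W$ in terms of $\hat{\mathcal K}^{\pm}$ close the argument. Concretely, first I would set
\[
\Lambda(\tau,\eta):=\tau^{2}-(\dot v_1^{+})^{2}\eta^{2}-2i\tau\eta\,\dot v_1^{+}\,\frac{\mu^{+}-\mu^{-}}{\mu^{+}+\mu^{-}},\qquad
\hat f=-\frac{\mu^{+}\mu^{-}}{\mu^{+}+\mu^{-}}\,\frac{W}{\Lambda},
\]
so the whole theorem reduces to the symbol estimate
\[
\Bigl|\tfrac{\mu^{+}\mu^{-}}{\mu^{+}+\mu^{-}}\Bigr|\;\le\;C\,\gamma^{-3/2}\,|\Lambda(\tau,\eta)|\,\la(\delta,\eta)\ra_{\gamma},
\]
uniformly for $\gamma\ge1$, where $\la\cdot\ra_\gamma^2=\gamma^2+\delta^2+\eta^2$; given this, $\gamma^{3}\|f\|_{H^{s+1}_\gamma}^2\le C\|W\la\cdot\ra_\gamma^{s+1}\|_{L^2}^2$, and the integral formula \eqref{3.10*} together with $\mathfrak R\mu^{\pm}\gtrsim\la\cdot\ra_\gamma$ (which follows from the homogeneity and positivity of $\mathfrak R\mu^\pm$ noted after the theorem) gives $|W|\lesssim \la\cdot\ra_\gamma^{-3/2}\bigl(\|\hat{\mathcal K}^+(\cdot,y)\|_{L^2_y}+\|\hat{\mathcal K}^-(\cdot,y)\|_{L^2_y}\bigr)$ by Cauchy--Schwarz on $\int_0^\infty e^{-\mu^\pm y}\hat{\mathcal K}^\pm\,dy$, using $\int_0^\infty e^{-2\mathfrak R\mu^\pm y}dy=(2\mathfrak R\mu^\pm)^{-1}$. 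Combining these two bounds yields \eqref{3.13*}.

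The heart of the matter is therefore the lower bound on $|\Lambda|$, and this is where the hypothesis $M_B>\sqrt2$ enters. By homogeneity of degree $2$ it suffices to bound $|\Lambda|$ from below on the compact hemisphere $\{|\tau|^2+\eta^2=1,\ \mathfrak R\tau=\gamma\ge0\}$, splitting into the interior region $\gamma>0$ and the boundary $\gamma=0$ (i.e. $\tau=i\delta$). For $\gamma>0$ one has $\mathfrak R\mu^\pm>0$ strictly, so $\frac{\mu^+-\mu^-}{\mu^++\mu^-}$ is a genuinely off-axis quantity and $\Lambda$ cannot vanish; the only possible degeneration is as $\gamma\to0$, so the real work is the analysis of the boundary symbol
\[
\Lambda(i\delta,\eta)=-\delta^{2}-(\dot v_1^{+})^{2}\eta^{2}-2\delta\eta\,\dot v_1^{+}\,\frac{\mu^{+}-\mu^{-}}{\mu^{+}+\mu^{-}}\Big|_{\tau=i\delta}.
\]
I would follow Section 4 (the root analysis of \eqref{3.9*}) and Section 5 (the supersonic stable zone): write $\mu^\pm|_{\tau=i\delta}=\pm\,\tfrac{i}{\bar C_B}\sqrt{(\delta\pm\dot v_1^+\eta)^2-\bar C_B^2\eta^2}$ when the radicand is positive (the \emph{supersonic/elliptic} regime) and real otherwise, substitute into $\Lambda$, and show that the characteristic equation $\Lambda=0$ has no real root $(\delta,\eta)$ precisely when $M_B=\dot v_1^+/\bar C_B>\sqrt2$ — this is the exact analogue of the classical $M>\sqrt2$ threshold for gas-dynamical vortex sheets, with $\bar C_B$ playing the role of the sound speed. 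Then a compactness argument on the unit hemisphere upgrades "no root" to a quantitative lower bound $|\Lambda|\ge c_0>0$ there, with $c_0$ depending only on $M_B$.

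Two technical points need care. The first is the precise rate $\gamma^{-3/2}$ (equivalently the power $\gamma^3$ in \eqref{3.13*}): near a root of the boundary symbol the multiplier $\frac{\mu^+\mu^-}{\mu^++\mu^-}/\Lambda$ could a priori blow up faster than $\la\cdot\ra_\gamma^{3/2}$, so one must check that under $M_B>\sqrt2$ the symbol $\Lambda$ is in fact \emph{elliptic} of order $2$ (bounded below by $c_0\la\cdot\ra_\gamma^2$) with no real characteristic points at all, which makes the quotient a bona fide symbol of order $1$ and the $\gamma$-loss comes solely from the two factors of $(\mathfrak R\mu^\pm)^{-1/2}$ in $W$. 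The second is the branch choice for $\mu^\pm$: one must use the determination $\mathfrak R\mu^\pm>0$ for $\mathfrak R\tau>0$ consistently, verify it extends continuously to $\gamma=0$ away from the glancing set $(\delta\pm\dot v_1^+\eta)^2=\bar C_B^2\eta^2$, and confirm that on the glancing set $\Lambda$ is still nonzero — all of which is supplied by the analysis already carried out in Sections 4–5 that this theorem is allowed to invoke. Uniqueness of $f\in H^{s+1}_\gamma$ is immediate once $\Lambda\ne0$: the symbol is invertible, so \eqref{3.9*} determines $\hat f$ pointwise.
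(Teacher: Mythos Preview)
Your overall strategy --- invert the symbol pointwise to get $\hat f$, then control the right-hand side $W$ by Cauchy--Schwarz using a lower bound on $\mathfrak R\mu^\pm$ --- is exactly the route the paper takes in Section~6. But two of the assertions on which your argument rests are false, and correcting them is precisely where the analysis of Sections~4--5 is used.

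First, the symbol $\Lambda$ (the paper's $\Sigma$) is \emph{not} elliptic when $M_B>\sqrt2$. Lemma~5.1 shows that $\Sigma$ vanishes on the boundary $\{\gamma=0\}$ at the points $\tau=\pm iX_1\eta$, where $X_1^2=(\dot v_1^+)^2+\bar C_B^2-\sqrt{\bar C_B^4+4\bar C_B^2(\dot v_1^+)^2}$. The condition $M_B>\sqrt2$ does not eliminate these roots; it makes $X_1^2>0$, so the roots lie on $\{\gamma=0\}$ rather than in $\{\gamma>0\}$ (the latter would be the Kelvin--Helmholtz instability), and it makes them \emph{simple}. This is the weakly stable regime, just as for classical compressible vortex sheets with $M>\sqrt2$. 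Near a simple root one has only $|\Sigma|\gtrsim\gamma\,\la\cdot\ra_\gamma$, not $|\Sigma|\gtrsim\la\cdot\ra_\gamma^2$; a partition of unity over elliptic points and root neighborhoods then gives $\gamma^2\|f\|^2_{H^{s+1}_\gamma}\le C\|g\|^2_{H^s_\gamma}$ for $\hat g=-\tfrac{\mu^+\mu^-}{\mu^++\mu^-}W$ (Theorem~6.2).

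Second, $\mathfrak R\mu^\pm$ is not bounded below by $\la\cdot\ra_\gamma$. Lemma~4.1 shows that for $\gamma=0$ the roots $\mu^\pm$ are purely imaginary over entire frequency intervals (cases \eqref{4.18}--\eqref{4.19}), so $\mathfrak R\mu^\pm$ vanishes there. The correct lower bound, quoted from \cite{Morando}, is $\mathfrak R\mu^\pm\ge c\,\gamma$. Writing $\hat g=\tfrac{\mu^-}{\mu^++\mu^-}\int_0^\infty e^{-\mu^+y}\hat{\mathcal K}^+dy-\tfrac{\mu^+}{\mu^++\mu^-}\int_0^\infty e^{-\mu^-y}\hat{\mathcal K}^-dy$, the prefactors are homogeneous of degree~$0$ and bounded, and Cauchy--Schwarz together with $\int_0^\infty e^{-2\mathfrak R\mu^\pm y}dy=(2\mathfrak R\mu^\pm)^{-1}\le C/\gamma$ yields $\|g\|^2_{H^s_\gamma}\le C\gamma^{-1}\bigl(\|\mathcal K^+\|^2_{L^2(\mathbb R^+;H^s_\gamma)}+\|\mathcal K^-\|^2_{L^2(\mathbb R^-;H^s_\gamma)}\bigr)$.

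Combining the two corrected bounds gives $\gamma^3=\gamma^2\cdot\gamma$ in \eqref{3.13*}: two powers from the simple-root estimate on $\Sigma$, one from $\mathfrak R\mu^\pm\gtrsim\gamma$. Your accounting, which attributes all the $\gamma$-loss to $W$ and none to the symbol, would --- if $\Sigma$ were truly elliptic --- produce a solution in $H^{s+2}_\gamma$ with no $\gamma$-loss, which is too strong and is ruled out by the presence of neutral modes.
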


\begin{rema}
		 In Euler fluids, the velocity of the rectilinear solutions (piecewise constant solutions) must exceed $\sqrt{2} c$ to prevent the occurrence of Kelvin-Helmholtz instability. However, in this article, we demonstrate that the velocity of rectilinear solutions needs to be greater than $\sqrt{2} \bar{C}_{B}$ to suppress Kelvin-Helmholtz instability, here $\bar{C}_{B} >c$. In other words, we provide a rigorous confirmation that the magnetic field does not always enhance the stability of the compressible system under consideration. 
	\end{rema}
	
  \begin{rema}
	In this paper, we consider the limit as $\dot{B}_{2}$ approaches zero. We observe that the stability condition $M_{B}>\sqrt{2}$ reduces to $M>\sqrt{2}$, which is a standard condition in Euler fluids. Consequently, the result referenced in \cite{Morando} can be obtained.
 \end{rema}

 \begin{rema}
	From the form of rectilinear solution, we can see that the magnetic field is transversal to the velocity, meanwhile parallel to the boundary ${x_{3}=0}$.
 \end{rema}

\begin{nota}
	For all $s\in \mathbb{R}^{2}$ and for all $\gamma\geq 1$, the usual Sobolev space $H^{s}(\mathbb{R}^{2})$ is equipped with the following norm:
	\begin{equation*}
		\| v\|^{2}_{s,\gamma}:= \frac{1}{(2\pi)^{2}} \int\int_{\mathbb{R}^{2}} \Lambda^{2s} (\tau,\eta) |\hat{v}(\delta,\eta)|^{2}, ~\Lambda^{2} := (\gamma^{2}+ \delta^{2}+\eta^{2})^{\frac{s}{2}}= (|\tau|^{2}+ \eta^{2})^{\frac{s}{2}},
	\end{equation*}
	where $\hat{v}(\delta,\eta)$ is the Fourier transform of $v(t,x_{1})$ and $\tau=\gamma+i \delta$. For $s\in \mathbb{R}$ and $\gamma\geq 1$, we introduce the weighted Sobolev space $H^{s}_{\gamma}(\mathbb{R}^{2})$ as
	\begin{equation*}
		H^{s}_{\gamma}(\mathbb{R}^{2}) : = \{u \in \mathcal{D}^{\prime}(\mathbb{R}^{2}) : e^{-\gamma t} u(t,x_{1}) \in H^{s}(\mathbb{R}^{2})\},
	\end{equation*}
	and its norm $\|u\|_{H^{s}_{\gamma}(\mathbb{R}^{2})}:= \|e^{-\gamma t} u \|_{s,\gamma}$.  We write $L^{2}_{\gamma}(\mathbb{R}^{2}):= H^{0}_{\gamma}(\mathbb{R}^{2})$ and $\|u\|_{L^{2}_{\gamma}(\mathbb{R}^{2})}:= \|e^{-\gamma t} u \|$.
	
	We define $L^{2}(\mathbb{R}^{\pm}: H^{s}_{\gamma}(\mathbb{R}^{2}))$ as the spaces of distributions with finite norm
	\begin{equation*}
		\| u \|^{2}_{L^{2}(\mathbb{R}^{\pm}: H^{s}_{\gamma}(\mathbb{R}^{2}))}:= \int_{\mathbb{R}^{+}} \|u(\cdot, \pm x_{3})\|^{2} _{H^{s}_{\gamma}(\mathbb{R}^{2})} dx_{3}.
	\end{equation*}
\end{nota}

\section{Proof of Theorem 2.1}

\quad \quad Following the approach in \cite{Morando}, we take the Fourier-Laplace transform of problems \eqref{3.5*} and \eqref{3.6*}  to deduce the formula for $\partial_{3}\hat{h}^{+}(0) + \partial_{3}\hat{h}^{-}(0)$. We then substitute this formula into \eqref{3.5*}, resulting in a second-order wave-type equation for $f$. More precisely, we begin by taking the Fourier transform of the problems \eqref{3.5*} and \eqref{3.6*} to obtain
\begin{equation}\label{4.1*}
\begin{aligned}
\tau^{2}\hat{f}-(\dot{v}^{+}_{1})^{2} \eta^{2} \hat{f}+ +\frac{1}{2}  c^{2} (\partial_{3}  \hat{h}^{+}+  \partial_{3}  \hat{h}^{-} )+ \frac{1}{2} \left(  \frac{\dot{B}^{+}_{2}}{\dot{\varrho}}\partial_{3} \hat{B}^{+}_{2} +  \frac{\dot{B}^{-}_{2}}{\dot{\varrho}}\partial_{3} \hat{B}^{-}_{2}\right)=0,~~\text{on}~\Gamma,
\end{aligned}
\end{equation}
and
\begin{equation}\label{4.2*}
\left\{
\begin{aligned}
&(\tau +i \dot{v}_{1} \eta)^{2} \hat{h}+ c^{2} \eta^{2} \hat{h} -c^{2}   \partial^{2}_{33} \hat{h}+  \frac{\dot{B}_{2}}  {\dot{\varrho}} \eta^{2}\hat{B}_{2}   - \frac{\dot{B}_{2}}  {\dot{\varrho}}  \partial^{2}_{33}   \hat{B}_{2}=\hat{\mathcal{K}},&~~~\text{in}~\Omega,\\
& (\tau +i \dot{v}_{1}\eta)^{2} \hat{B}_{2}  - \dot{B}_{2} (\tau +i \dot{v}_{1}\eta)^{2} \hat{h}=0, &~~~\text{in}~\Omega,\\
&[c^{2}\hat{h}+ \dot{\varrho}^{-1}\dot{B}_{2} \hat{B}_{2}]= 0, &~~~\text{on}~ \Gamma,\\
&[c^{2} \partial_2 \hat{h}+ \dot{\varrho}^{-1} \dot{B}_{2} \partial_{3} \hat{B}_{2}]=-4i \tau \eta  \hat{f} \dot{v}_1^{+}, &~~~\text{on}~ \Gamma.
\end{aligned}
\right.
\end{equation}
From the first two equations in  \eqref{4.2*}, we can deduce  $$\hat{h}=\hat{B}_{2},$$
then we substitute this relationship into \eqref{4.2*} to get
\begin{equation}\label{4.1**}
\begin{aligned}
\tau^{2}\hat{f}-(\dot {v}^{+}_{1})^{2} \eta^{2} \hat{f}+ \frac{ \bar{C}_{B}^{2}}{2}[\partial_{3} \hat{h}^{+}+  \partial_{3} \hat{h}^{-}]=0,~~\text{on}~\Gamma,
\end{aligned}
\end{equation}
and 
\begin{equation}\label{4.3*}
\left\{
\begin{aligned}
&(\tau +i \dot{v}_{1}\eta)^{2}\hat{h} +  \bar{C}_{B}^{2} \eta^{2} \hat{h}- \bar{C}_{B}^{2} \partial^{2}_{2} \hat{h}=\hat{\mathcal{K}},&~~~\text{on}~\Omega,\\
&[\bar{C}_{B}^{2} \hat{h}]=0,&~~~\text{on}~ \Gamma,\\
&[\bar{C}_{B}^{2}  \partial_{3}\hat{h}  ]= -4i \tau \eta \hat{f}\dot{v}^{+}_{1}, &~~~\text{on}~ \Gamma,
\end{aligned}
\right.
\end{equation}
where
\begin{equation}\label{4.4*}
\bar{C}_{B}^{2}:= c^{2} + c_{A}^{2},~ c_{A}^{2}:=\frac{(\dot{B}_{2})^{2}}  {\dot{\varrho}}  .
\end{equation}

Next, we define the Laplace transform in $x_{3}$ with dual variable $s\in \mathbb{C}$ as follow:
\begin{equation*}
\mathcal{L}[\hat{h}](s)=\int_{0}^{\infty} e^{-s x_{3}} \hat{h}(\cdot, x_{3}) d x_{2},
\end{equation*}
\begin{equation*}
\mathcal{L}[\hat{\mathcal{K}}](s)=\int_{0}^{\infty} e^{-s x_{3}} \hat{\mathcal{K}}(\cdot, x_{3}) d x_{3}.
\end{equation*}

Performing the Laplace transform of the problem \eqref{4.3*} to get
\begin{equation}\label{4.5}
[(\tau +i \dot{v}_{1}\eta)^{2}+ \bar{C}_{B}^{2}\eta^{2}  -  \bar{C}_{B}^{2}s^{2}]\mathcal{L}[\hat{h}](s) = \mathcal{L}[\hat{\mathcal{F}}](s)-\bar{C}_{B}^{2} s \hat{h}(0) \mp \bar{C}_{B}^{2} \partial_{3} \hat{h}(0),
\end{equation}
from this equation \eqref{4.4*}, we can obtain
\begin{equation}\label{4.6}
\begin{aligned}
\mathcal{L}[\hat{h}](s)= \frac{\bar{C}_{B}^{2} s \hat{h}(0) \pm \bar{C}_{B}^{2} \partial_{3} \hat{h}(0)}{\bar{C}_{B}^{2} s^{2}-(\tau +i \dot{v}_{1}\eta)^{2}- \bar{C}_{B}^{2}\eta^{2}} - \frac{\mathcal{L}[\hat{\mathcal{K}}](s)}{ \bar{C}_{B}^{2} s^{2}-(\tau +i \dot{v}_{1}\eta)^{2}- \bar{C}_{B}^{2}\eta^{2}}.
\end{aligned}
\end{equation}

We denote the root of the equation (in s)
\begin{equation}\label{4.7}
 \bar{C}_{B}^{2} s^{2}-(\tau +i \dot{v}_{1}\eta)^{2}- \bar{C}_{B}^{2}\eta^{2}=0,
\end{equation}

\begin{equation}\label{4.8}
\mathfrak{R} \mu^{\pm} >0,~if~\gamma>0.
\end{equation}

By taking the inverse Laplace transform of \eqref{4.6},  for $x_{3}>0$, we  have
\begin{equation}\label{4.9}
\hat{h}^{+}(\cdot,x_{3})= \hat{h}^{+}(0) {\rm cosh}(\mu^{+}x_{3}) + \partial_{3}\hat{h}^{+}(0)\frac{{\rm sinh}(\mu^{+} x_{3})}{\mu^{+}}- \int^{x_{3}}_{0} \frac{{\rm sinh}(\mu^{+} (x_{3}-y))}{\bar{C}_{B}^{2}\mu^{+}} \hat{\mathcal{K}}^{+}(\cdot,y) dy,
\end{equation}

\begin{equation}\label{4.10}
\hat{h}^{-}(\cdot,-x_{3})= \hat{h}^{-}(0) {\rm cosh}(\mu^{-}x_{3}) - \partial_{3}\hat{h}^{-}(0)\frac{{\rm sinh}(\mu^{-} x_{3})}{\mu^{-}}- \int^{x_{3}}_{0} \frac{{\rm sinh}(\mu^{-} (x_{3}-y))}{\bar{C}_{B}^{2}\mu^{-}} \hat{\mathcal{K}}^{-}(\cdot, -y) dy.
\end{equation}

In order to determine the values of $\hat{h}^{+}(0)$, $\partial_{3}\hat{h}^{+}(0)$ in \eqref{4.9} and \eqref{4.10}, we need two addition conditions. To achieve this goal,   in according with the assumption \eqref{3.7*},  it follows that
\begin{equation}\label{4.11}
\lim_{x_{3}\rightarrow \infty} \int_{0}^{x_{3}}  e^{-\mu^{\pm} (x_3-y)} \hat{\mathcal{K}}^{\pm}(\cdot,\pm y) dy=0.
\end{equation}

Using the equations \eqref{4.6},  \eqref{4.7},  \eqref{4.8} and  \eqref{3.7*},  we deduce
\begin{equation}\label{4.12}
\hat{h}^{\pm}(0) \pm \frac{1}{\mu^{\pm}} \partial_{3}\hat{h}^{\pm}(0)- \frac{1}{(\bar{C}_{B})^2 \mu^{\pm}} \int^{\infty}_{0} e^{-\mu^{\pm} y} \hat{\mathcal{K}}^{+}(\cdot, \pm y) dy =0.
\end{equation}

Combing two boundary conditions in \eqref{4.3*} and two boundary conditions in \eqref{4.9},  we have following system:
\begin{equation}\label{4.13}
\left\{
\begin{aligned}
& \bar{C}_{B}^{2} \hat{h}^{+}(0)- \bar{C}_{B}^{2}  \hat{h}^{-}(0)=0,\\
&\bar{C}_{B}^{2} \partial_{3}\hat{h}^{+}(0)-\bar{C}_{B}^{2}\partial_{3}\hat{h}^{-}(0)= - 4i \tau\eta \hat{f}\dot{v}_{1}^{+} ,\\
&\mu^{+} \hat{h}^{+}(0) +\partial_{3}\hat{h}^{+}(0)=  \frac{1}{\bar{C}_{B}^{2}}\int^{\infty}_{0} e^{-\mu^{+} y} \hat{\mathcal{K}}^{+}(\cdot,y) dy,\\
&\mu^{-} \hat{h}^{-}(0) -\partial_{3}\hat{h}^{-}(0)= \frac{1}{\bar{C}_{B}^{2}} \int^{\infty}_{0} e^{-\mu^{-} y} \hat{\mathcal{K}}^{-}(\cdot, -y) dy.
\end{aligned}
\right.
\end{equation}

From \eqref{4.8}, we know that the value of $\mu^{+}+\mu^{-}$ never vanishes as long as $
\gamma>0$. By directly computing, we deduce
\begin{equation}\label{4.14}
\bar{C}_{B}^{2}\partial_{3}\hat{h}^{+}(0)+\bar{C}_{B}^{2}\partial_{3}\hat{h}^{-}(0)=-4i \tau\eta \hat{f}\dot{v}^{+}_{1} \frac{\mu^{+}-\mu^{-}}{\mu^{+}+\mu^{-}}+ 2 \frac{\mu^{+}\mu^{-}}{\mu^{+}+\mu^{-}} W,
\end{equation}
where
\begin{equation}\label{3.16}
W = \frac{1}{\mu^{+}\bar{C}_{B}^{2}} \int_{0}^{\infty} e^{-\mu^{+}y} \hat{\mathcal{K}}^{+}(\cdot,y) dy -  \frac{1}{\mu^{-}\bar{C}_{B}^{2}} \int_{0}^{\infty} e^{-\mu^{-}y} \hat{\mathcal{K}}^{-}(\cdot,-y) dy.
\end{equation}

Substituting \eqref{4.14} into \eqref{4.1*} to obtain a second-order wave equation for $\hat{f}$
\begin{equation}\label{3.155}
\begin{aligned}
\left(\tau^{2}-(\dot{v}^{+}_{1})^{2} \eta^{2}-2i \tau\eta \dot{v}^{+}_{1} \frac{\mu^{+}-\mu^{-}}{\mu^{+}+\mu^{-}}\right)\hat{f}+ \frac{\mu^{+} \mu^{-}}{\mu^{+}+\mu^{-}}W=0,~~\text{on}~\Gamma.
\end{aligned}
\end{equation}

\section{The symbol of equation \eqref{3.155} for the front}
\subsection{Study of the roots $\mu^{\pm}$}
\quad \quad To further simplify the equation, we need to study the roots $\mu^{\pm}$. We introduce  the symbol of \eqref{3.155} by $\Sigma$:
\begin{equation}\label{4.144}
	\Sigma= \tau^{2}-(\dot{v}^{+}_{1})^{2} \eta^{2}-2i\tau \eta\dot{v}^{+}_{1} \frac{\mu^{+}-\mu^{-}}{\mu^{+}+\mu^{-}},
\end{equation}
and define a hemisphere by  $\Xi_1$:
\begin{equation}
	\Xi_1= \{ (\tau,\eta)\in \mathbb{C}\times \mathbb{R} : |\tau|^{2}+ \eta^{2}=1, \mathfrak{R} \tau\geq 0 \},
\end{equation}
and the set of "frequencies"
\begin{equation}
    \Xi= \{ (\tau,\eta)\in \mathbb{C}\times \mathbb{R} : \mathfrak{R} \tau\geq 0, (\tau,\eta)\neq(0,0)\}=(0,\infty)\cdot \Xi_1.
\end{equation}
To begin with,  we are going to study the property of the roots $\mu^{\pm}$.
\begin{lemm}
	(\cite{Morando}) Let $(\tau,\eta)\in \Xi$ and let us consider the equation
\begin{equation}\label{4.15}
	s^{2}= \frac{(\tau+i \dot{v}^{\pm}_{1} \eta)^{2}}{\bar{C}^{2}_{B}} + \eta^{2}.
\end{equation}
For both cases $\pm$ of \eqref{4.15} there exists one root, denoted by $\mu^{\pm}=\mu^{\pm}(\tau,\eta)$, such that $\mathfrak{R}\mu^{\pm}>0$ as long as $\mathfrak{R}\tau>0$. The other root is $-\mu^{\pm}$. The root $\mu^{\pm}$ admit a continuous extension to points $(\tau,\eta)=(i \delta, \eta)\in \Xi$, i.e., with $\mathfrak{R} \tau=0$. More precisely, 
\begin{item}
	\item(i) When $\eta=0$, we have $\mu^{\pm}(i\delta,0)= i\delta/\bar{C}_{B}$.
	\item(ii) When $\eta\neq0$,  it is noticed that $\Sigma(\tau, \eta)= \Sigma(\tau, -\eta)$, thus we only consider positive values of $\eta$ in all this section: $\eta> 0$, if $- \frac{ \dot{v}^{\pm}_{1}}{\bar{C}_{B}}-1< \frac{\delta}{\bar{C}_{B} \eta} < - \frac{ \dot{v}^{\pm}_{1}}{\bar{C}_{B}}+1$,  we have
		\begin{equation}\label{4.16}
			\mu^{\pm} (i\delta,\eta)= \sqrt{\frac{- (\delta+ \dot{v}^{\pm}_{1} \eta)^{2}} {\bar{C}_{B}^{2}}  +\eta^{2}},
		\end{equation}
		if   $\frac{\delta}{\bar{C}_{B} \eta} = -( \frac{ \dot{v}^{\pm}_{1}}{\bar{C}_{B}}\pm 1)$, we have
		\begin{equation}\label{4.17}
			\mu^{\pm} (i\delta,\eta)= 0,
		\end{equation}
		if $ \frac{\delta}{\bar{C}_{B}\eta}< - \frac{ \dot{v}^{\pm}_{1}}{\bar{C}_{B}}-1$, we have
		\begin{equation}\label{4.18}
			\mu^{\pm} (i\delta,\eta)=-i  \sqrt{\frac{(\delta+ \dot{v}^{\pm}_{1} \eta)^{2}} {\bar{C}_{B}^{2}}  -\eta^{2}},
		\end{equation}
		if $ \frac{\delta}{\bar{C}_{B} \eta} >- \frac{ \dot{v}^{\pm}_{1}}{\bar{C}_{B}}+1$, we have
		\begin{equation}\label{4.19}
			\mu^{\pm} (i\delta,\eta)=i \sqrt{\frac{ (\delta+ \dot{v}^{\pm}_{1} \eta)^{2}} {\bar{C}_{B}^{2}}  -\eta^{2}}.
		\end{equation}
	\end{item}
\end{lemm}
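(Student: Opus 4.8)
The equation \eqref{4.15} is of the form $s^{2}=z^{\pm}$ with $z^{\pm}=z^{\pm}(\tau,\eta):=\dfrac{(\tau+i\dot{v}^{\pm}_{1}\eta)^{2}}{\bar{C}_{B}^{2}}+\eta^{2}$, so the statement is really an assertion about the principal square root of the map $(\tau,\eta)\mapsto z^{\pm}$. The plan is: (1) check that for $\mathfrak{R}\tau>0$ the value $z^{\pm}$ never lies on the branch cut $(-\infty,0]$, which produces the distinguished root $\mu^{\pm}$ with $\mathfrak{R}\mu^{\pm}>0$ and the complementary root $-\mu^{\pm}$; (2) let $\gamma=\mathfrak{R}\tau\downarrow0$ along $\tau=\gamma+i\delta$ and compute $\lim\mu^{\pm}$ by tracking $\arg z^{\pm}$; (3) translate the resulting sign conditions into the ranges of $\delta/(\bar{C}_{B}\eta)$ listed in \eqref{4.16}--\eqref{4.19}. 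For step (1), write $\tau=\gamma+i\delta$ with $\gamma>0$ and set $\beta^{\pm}:=\delta+\dot{v}^{\pm}_{1}\eta$, so that $\tau+i\dot{v}^{\pm}_{1}\eta=\gamma+i\beta^{\pm}$ and
\[
z^{\pm}=\frac{\gamma^{2}-(\beta^{\pm})^{2}}{\bar{C}_{B}^{2}}+\eta^{2}+i\,\frac{2\gamma\beta^{\pm}}{\bar{C}_{B}^{2}} .
\]
If $\beta^{\pm}\neq0$ then $\mathfrak{I}z^{\pm}\neq0$ since $\gamma>0$; if $\beta^{\pm}=0$ then $z^{\pm}=\gamma^{2}/\bar{C}_{B}^{2}+\eta^{2}>0$. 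In either case $z^{\pm}\in\mathbb{C}\setminus(-\infty,0]$, where the principal square root is holomorphic with strictly positive real part, and I would define $\mu^{\pm}:=\sqrt{z^{\pm}}$; then $\mathfrak{R}\mu^{\pm}>0$, the quadratic $s^{2}=z^{\pm}$ has exactly the two roots $\pm\mu^{\pm}$, and $\mu^{\pm}$ is continuous (indeed smooth) and positively homogeneous of degree $1$ on $\{\mathfrak{R}\tau>0\}$.

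For step (2), fix $(i\delta,\eta)\in\Xi$, so $(\delta,\eta)\neq(0,0)$, and let $\gamma\downarrow0$ with $\tau_{\gamma}:=\gamma+i\delta$. Then $z^{\pm}(\tau_{\gamma},\eta)\to z_{0}^{\pm}:=\eta^{2}-\dfrac{(\delta+\dot{v}^{\pm}_{1}\eta)^{2}}{\bar{C}_{B}^{2}}\in\mathbb{R}$, while the display above gives $\mathrm{sgn}\,\mathfrak{I}z^{\pm}(\tau_{\gamma},\eta)=\mathrm{sgn}\,\beta^{\pm}=\mathrm{sgn}(\delta+\dot{v}^{\pm}_{1}\eta)$ for all small $\gamma>0$. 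If $\eta=0$, then $z^{\pm}(\tau_{\gamma},0)=\tau_{\gamma}^{2}/\bar{C}_{B}^{2}$ and $\mu^{\pm}=\tau_{\gamma}/\bar{C}_{B}\to i\delta/\bar{C}_{B}$, which is case (i). If $\eta\neq0$: when $z_{0}^{\pm}>0$, continuity of $\sqrt{\cdot}$ at $z_{0}^{\pm}$ gives $\mu^{\pm}\to\sqrt{z_{0}^{\pm}}>0$; when $z_{0}^{\pm}=0$, $\mu^{\pm}\to0$; and when $z_{0}^{\pm}<0$ (which here forces $\beta^{\pm}\neq0$), the negative real number $z_{0}^{\pm}$ is approached from the upper half-plane if $\delta+\dot{v}^{\pm}_{1}\eta>0$ and from the lower half-plane if $\delta+\dot{v}^{\pm}_{1}\eta<0$, so $\arg z^{\pm}\to\pi$ (resp.\ $-\pi$) and hence $\mu^{\pm}\to i\sqrt{-z_{0}^{\pm}}$ (resp.\ $-i\sqrt{-z_{0}^{\pm}}$). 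Since $\mathrm{sgn}\,\mathfrak{I}z^{\pm}$ stays constant for all nearby $(\tau,\eta)$ with $\mathfrak{R}\tau>0$ whenever $\delta+\dot{v}^{\pm}_{1}\eta\neq0$, this limit is the genuine two-sided boundary value, so $\mu^{\pm}$ extends continuously to all of $\Xi$.

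For step (3), the symbol \eqref{4.144} satisfies $\Sigma(\tau,\eta)=\Sigma(\tau,-\eta)$: indeed $\dot{v}^{-}_{1}=-\dot{v}^{+}_{1}$ (see \eqref{2.4}) gives $\mu^{-}(\tau,\eta)=\mu^{+}(\tau,-\eta)$, so $\eta\mapsto-\eta$ merely swaps $\mu^{+}$ and $\mu^{-}$ and leaves $\Sigma$ unchanged; hence it suffices to treat $\eta>0$ in case (ii). Dividing by $\bar{C}_{B}\eta>0$ turns the sign conditions on $z_{0}^{\pm}$ into: $z_{0}^{\pm}>0\iff -\tfrac{\dot{v}^{\pm}_{1}}{\bar{C}_{B}}-1<\tfrac{\delta}{\bar{C}_{B}\eta}<-\tfrac{\dot{v}^{\pm}_{1}}{\bar{C}_{B}}+1$, i.e.\ \eqref{4.16}; $z_{0}^{\pm}=0\iff\tfrac{\delta}{\bar{C}_{B}\eta}=-\tfrac{\dot{v}^{\pm}_{1}}{\bar{C}_{B}}\pm1$, i.e.\ \eqref{4.17}; $z_{0}^{\pm}<0$ with $\delta+\dot{v}^{\pm}_{1}\eta<0\iff\tfrac{\delta}{\bar{C}_{B}\eta}<-\tfrac{\dot{v}^{\pm}_{1}}{\bar{C}_{B}}-1$, i.e.\ \eqref{4.18}; and $z_{0}^{\pm}<0$ with $\delta+\dot{v}^{\pm}_{1}\eta>0\iff\tfrac{\delta}{\bar{C}_{B}\eta}>-\tfrac{\dot{v}^{\pm}_{1}}{\bar{C}_{B}}+1$, i.e.\ \eqref{4.19}.

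I expect the only genuinely delicate point to be the last regime of step (2): one must be sure the continuous extension selects $+i\sqrt{-z_{0}^{\pm}}$ or $-i\sqrt{-z_{0}^{\pm}}$ without ambiguity, i.e.\ that $z^{\pm}(\tau_{\gamma},\eta)$ does not cross back and forth over the cut as $\gamma\downarrow0$. This is exactly what the sign computation above guarantees, since $\mathrm{sgn}\,\mathfrak{I}z^{\pm}(\gamma+i\delta,\eta)$ is constant for all small $\gamma>0$, so that $z^{\pm}$ stays strictly on one side of $(-\infty,0]$ and $\arg z^{\pm}$ has the claimed one-sided limit. Everything else --- the interior case and the translation of the sign conditions into the stated intervals --- is routine bookkeeping.
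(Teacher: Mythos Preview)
Your argument is correct and complete. Note, however, that the paper does not actually prove this lemma: it is stated with a citation to \cite{Morando} and no proof is given. So there is no ``paper's own proof'' to compare against; you have supplied what the authors chose to omit.

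Your route---realizing the equation as $s^{2}=z^{\pm}(\tau,\eta)$, showing $z^{\pm}\notin(-\infty,0]$ when $\mathfrak{R}\tau>0$ via the explicit real/imaginary decomposition, then tracking the side of the branch cut from which $z^{\pm}$ approaches the negative axis as $\gamma\downarrow0$---is the standard and cleanest way to handle such square-root symbols. The one point you rightly flagged as delicate (the well-definedness of the extension when $z_{0}^{\pm}<0$) is fully resolved by your observation that $\mathrm{sgn}\,\mathfrak{I}z^{\pm}=\mathrm{sgn}(\delta+\dot{v}^{\pm}_{1}\eta)$ is locally constant for small $\gamma>0$, so $z^{\pm}$ stays on a fixed side of the cut. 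Your verification of the symmetry $\Sigma(\tau,\eta)=\Sigma(\tau,-\eta)$ from $\dot{v}^{-}_{1}=-\dot{v}^{+}_{1}$ is also correct and worth including, since the paper merely asserts it.
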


\subsection{Study of  the symbol $\Sigma$}

\quad \quad From Lemmas 4.1, we can see that $\mu^{+}+ \mu^{-}=0$ in the points $(\tau,\eta)=(0, \eta)$ if and only if $\dot{v}^{+}_{1}\geq \bar{C}_{B} $. However even this situation happens, the symbol $\Sigma$ is well defined.
\begin{lemm}
	Assume $\dot{v}^{+}_{1}\geq \bar{C}_{B} $,  $\mu^{+}(0,\eta)+ \mu^{-}(0,\eta)=0$ for all $\eta\neq 0$. Then the symbol $\Sigma$ is well defined as follows:
	\begin{equation}
		\Sigma \mapsto [(\dot{v}^{+}_{1})^{2}-2\bar{C}_{B}^{2} ] \bar{\eta}^{2}, ~as~(\delta, \eta)\rightarrow (0, \bar{\eta}).
	\end{equation}
	
\end{lemm}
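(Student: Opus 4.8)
The only feature of $\Sigma$ that could fail to be well defined near $(\delta,\eta)=(0,\bar\eta)$ is the quotient $\dfrac{\mu^{+}-\mu^{-}}{\mu^{+}+\mu^{-}}$ in \eqref{4.144}, whose denominator vanishes there; the plan is to show that this is only an apparent singularity, by rewriting the cross term so that the factor $\mu^{+}+\mu^{-}$ cancels identically. Starting from $\mu^{+}-\mu^{-}=\dfrac{(\mu^{+})^{2}-(\mu^{-})^{2}}{\mu^{+}+\mu^{-}}$ (valid whenever $\mu^{+}+\mu^{-}\neq 0$) one has
\[
\frac{\mu^{+}-\mu^{-}}{\mu^{+}+\mu^{-}}=\frac{(\mu^{+})^{2}-(\mu^{-})^{2}}{(\mu^{+}+\mu^{-})^{2}},\qquad
(\mu^{+}-\mu^{-})^{2}=\frac{\bigl((\mu^{+})^{2}-(\mu^{-})^{2}\bigr)^{2}}{(\mu^{+}+\mu^{-})^{2}} .
\]
From the defining equation \eqref{4.15} together with $\dot v_{1}^{-}=-\dot v_{1}^{+}$ one computes directly that $(\mu^{+})^{2}-(\mu^{-})^{2}=\dfrac{4i\,\dot v_{1}^{+}\eta\tau}{\bar C_{B}^{2}}$. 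Inserting these two facts into \eqref{4.144} I expect, after a short manipulation, the identity
\[
\Sigma=\tau^{2}-(\dot v_{1}^{+})^{2}\eta^{2}-\frac{\bar C_{B}^{2}}{2}\,(\mu^{+}-\mu^{-})^{2},
\]
in which $\mu^{+}+\mu^{-}$ no longer appears.

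The rest is continuity. By Lemma 4.1 the roots $\mu^{\pm}$ extend continuously to all of $\Xi$, in particular to $(0,\bar\eta)$ approached with $\mathfrak{R}\tau\ge 0$; hence the right-hand side above is continuous there and coincides with $\Sigma$ wherever $\mu^{+}+\mu^{-}\neq 0$, which is exactly the sense in which $\Sigma$ is ``well defined'' in the statement. It then remains to evaluate the limit $\tau\to 0$, $\eta\to\bar\eta$. Using $\Sigma(\tau,\eta)=\Sigma(\tau,-\eta)$ I may assume $\bar\eta>0$ and read $\mu^{\pm}(0,\bar\eta)$ off Lemma 4.1(ii). When $\dot v_{1}^{+}>\bar C_{B}$, at $\delta=0$, $\eta=\bar\eta$ one has $\delta/(\bar C_{B}\eta)=0>1-\dot v_{1}^{+}/\bar C_{B}$, so $\mu^{+}(0,\bar\eta)=i\bar\eta\sqrt{(\dot v_{1}^{+})^{2}/\bar C_{B}^{2}-1}$, while the matching regime for $\mu^{-}$ is $\delta/(\bar C_{B}\eta)=0<\dot v_{1}^{+}/\bar C_{B}-1$, giving $\mu^{-}(0,\bar\eta)=-i\bar\eta\sqrt{(\dot v_{1}^{+})^{2}/\bar C_{B}^{2}-1}$; therefore $(\mu^{+}-\mu^{-})^{2}=-\dfrac{4\bar\eta^{2}}{\bar C_{B}^{2}}\bigl((\dot v_{1}^{+})^{2}-\bar C_{B}^{2}\bigr)$, and substituting into the identity with $\tau\to 0$ yields $\Sigma\to\bigl[(\dot v_{1}^{+})^{2}-2\bar C_{B}^{2}\bigr]\bar\eta^{2}$. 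When $\dot v_{1}^{+}=\bar C_{B}$, the value $\delta/(\bar C_{B}\eta)=0$ is precisely the transition point for both $\mu^{+}$ and $\mu^{-}$, so $\mu^{\pm}(0,\bar\eta)=0$, $(\mu^{+}-\mu^{-})^{2}=0$, and the identity gives $\Sigma\to-(\dot v_{1}^{+})^{2}\bar\eta^{2}=\bigl[(\dot v_{1}^{+})^{2}-2\bar C_{B}^{2}\bigr]\bar\eta^{2}$ using $(\dot v_{1}^{+})^{2}=\bar C_{B}^{2}$. In either case the claimed value is obtained.

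The single delicate step is the algebraic identity. A priori the cross term is a $0/0$ indeterminacy at $(0,\bar\eta)$, since both $\mu^{+}+\mu^{-}$ and the prefactor $\tau$ vanish; the real content is that $\mu^{+}-\mu^{-}$ vanishes to the same order as $\mu^{+}+\mu^{-}$, because $(\mu^{+})^{2}-(\mu^{-})^{2}=O(\tau)$, and the identity turns this into an exact cancellation, so that no Taylor expansion of the square roots $\mu^{\pm}$ is ever needed. A minor bookkeeping point is to select the correct branches of $\mu^{\pm}$ at $(0,\bar\eta)$ from the case analysis of Lemma 4.1(ii); the normalization $\mathfrak{R}\mu^{\pm}>0$ for $\mathfrak{R}\tau>0$ pins down the continuous extension uniquely, so there is no ambiguity.
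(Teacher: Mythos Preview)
Your proof is correct and follows essentially the same route as the paper: both rewrite the cross term via $\dfrac{\mu^{+}-\mu^{-}}{\mu^{+}+\mu^{-}}=\dfrac{(\mu^{+}-\mu^{-})^{2}}{(\mu^{+})^{2}-(\mu^{-})^{2}}$ and use $(\mu^{+})^{2}-(\mu^{-})^{2}=4i\dot v_{1}^{+}\eta\tau/\bar C_{B}^{2}$ to cancel the apparent singularity, then evaluate $(\mu^{+}-\mu^{-})^{2}$ at $(0,\bar\eta)$ case by case from Lemma~4.1. Your packaging of this into the single identity $\Sigma=\tau^{2}-(\dot v_{1}^{+})^{2}\eta^{2}-\tfrac{\bar C_{B}^{2}}{2}(\mu^{+}-\mu^{-})^{2}$, valid on all of $\Xi$ by continuity, is slightly cleaner than the paper's presentation, which restricts to $\tau=i\delta$ first and carries the two cases separately, but the content is the same.
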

\begin{proof}
	First case: $\dot{v}^{+}_{1} > \bar{C}_{B}$.  Let $(\tau,\eta)= (i\delta, \eta)\in \Xi$ such that $\mathfrak{R} \tau=0, \eta\neq 0 $. From Lemma 4.1, we know that
	when $(\delta, \eta)$ in a small neighborhood of $(0,\bar{\eta})$, there holds
	$-(\frac{\dot{v}^{+}_{1} }{\bar{C}_{B}}- 1 )< \frac{\delta}{\bar{C}_{B} \eta} <\frac{\dot{v}^{+}_{1} }{\bar{C}_{B}}-1 $, therefore we have
	\begin{equation}
		\mu^{+}= i \sqrt{\frac{ (\delta+ \dot{v}^{+}_{1} \eta)^{2}} {\bar{C}_{B}^{2}}-\eta^{2}},
	\end{equation}
	and
	\begin{equation}
		\mu^{-}= -i\sqrt{\frac{ (\delta- \dot{v}^{+}_{1} \eta)^{2}} {\bar{C}_{B}^{2}}  -\eta^{2}} ,
	\end{equation}
	it follows that the sum $\mu^{+}+ \mu^{-}$ tend to zero as long as $\delta\rightarrow 0$, However, the symbol $\Sigma(\tau, \eta)$ does not tend to zero. We mainly focus the term
	$-2i \dot{v}^{+}_{1}  \eta \tau \frac{\mu^{+}-\mu^{-}}{\mu^{+}+\mu^{-}}$ with $\delta\rightarrow 0$.
	\begin{equation}\label{5.211}
		\begin{aligned}
			-2i \dot{v}^{+}_{1}  \eta \tau \frac{\mu^{+}-\mu^{-}}{\mu^{+}+\mu^{-}}
			=& 2 \dot{v}^{+}_{1}  \eta \delta  \frac{(\mu^{+} - \mu^{-})^{2}}{(\mu^{+})^{2}-(\mu^{-})^{2}}.
		\end{aligned}
	\end{equation}
	Now we compute
	\begin{equation}
		\begin{aligned}
			&(\mu^{+})^{2}-(\mu^{-})^{2}=-\frac{4 \delta \dot{v}^{+}_{1} \eta}{\bar{C}_{B}^{2}},\\
			&(\mu^{+} - \mu^{-})^{2}=-2\left[\frac{ \delta^{2}+ (\dot{v}^{+}_{1} \eta)^{2}} {\bar{C}_{B}^{2}}
			-\eta^{2} 
			+\sqrt{\frac{ (\delta+ \dot{v}^{+}_{1} \eta)^{2}} {\bar{C}_{B}^{2}}
				-\eta^{2}} \sqrt{\frac{ (\delta- \dot{v}^{+}_{1} \eta)^{2}} {\bar{C}_{B}^{2}}  -\eta^{2}}\right],
		\end{aligned}
	\end{equation}
	substituting this expression in \eqref{5.211} and  passing to the limit as $(\delta, \eta)\rightarrow (0, \bar{\eta})$ we obtain
	\begin{equation}
		\begin{aligned}
			-2i \dot{v}^{+}_{1} \eta \tau \frac{\mu^{+}-\mu^{-}}{\mu^{+}+\mu^{-}}\rightarrow  2[(\dot{v}^{+}_{1})^{2}
			-\bar{C}_{B}^{2} ] \bar{\eta}^{2}, ~as~(\delta, \eta)\rightarrow (0, \bar{\eta}).
		\end{aligned}
	\end{equation}
	
	Therefore  we have
	\begin{equation}
		\begin{aligned}
			&\Sigma \rightarrow  [(\dot{v}^{+}_{1})^{2}
			-2\bar{C}_{B}^{2} ] \bar{\eta}^{2} ~as~(\delta, \eta)\rightarrow (0, \bar{\eta}).
		\end{aligned}
	\end{equation}
	From this, we can see that the symbol $\Sigma$ is well defined.
	
	Second case: $v=\bar{C}_{B}$.  Let $(\tau,\eta)= (i\delta, \eta)\in \Xi$ such that $\mathfrak{R} \tau=0, \eta\neq 0 $. From Lemma 4.1, we know that
	when $(\delta, \eta)$ in a small neighborhood of $(0,\bar{\eta})$, if $\delta>0$,  there holds
	\begin{equation*}
		\mu^{+}= i \sqrt{\frac{ (\delta+ \dot{v}^{+}_{1} \eta)^{2}} {\bar{C}_{B}^{2}}
			-\eta^{2}} , ~ \mu^{-}= \sqrt{\frac{ -(\delta- \dot{v}^{+}_{1} \eta)^{2}} {\bar{C}_{B}^{2}} +\eta^{2} }.
	\end{equation*}
	If $\delta<0$, we have
	\begin{equation*}
		\mu^{+}=  \sqrt{\frac{ -(\delta+ \dot{v}^{+}_{1} \eta)^{2}} {\bar{C}_{B}^{2}} +\eta^{2} },
		~ \mu^{-}= - i \sqrt{\frac{ (\delta- \dot{v}^{+}_{1} \eta)^{2}} {\bar{C}_{B}^{2}}
			-\eta^{2}}.
	\end{equation*}
	From the above values of $\mu^{\pm}$ and passing to the limits as $(\delta, \eta)\rightarrow (0, \bar{\eta})$, we obtain
	\begin{equation}
		(\mu^{+}- \mu^{-})^{2} \mapsto 0 , ~as ~(\delta, \eta)\rightarrow (0, \bar{\eta}),
	\end{equation}
	hence \begin{equation}
		\begin{aligned}
			-2i \dot{v}^{+}_{1} \eta \tau \frac{\mu^{+}-\mu^{-}}{\mu^{+}+\mu^{-}}= - \frac{(\mu^{+}- \mu^{-})^{2}}{2}\rightarrow 0, ~as~(\delta, \eta)\rightarrow (0, \bar{\eta}).
		\end{aligned}
	\end{equation}
	
	Therefore  we have
	\begin{equation}
		\begin{aligned}
			&\Sigma \rightarrow -\bar{C}_{B}^{2} \bar{\eta}^{2} ~as~(\delta, \eta)\rightarrow (0, \bar{\eta}).
		\end{aligned}
	\end{equation}
	From this, we can see that the symbol $\Sigma$ is well defined.
\end{proof}

We also need to determine whether the difference $\mu^{+}-\mu^{-}$ vanishes.
\begin{lemm}
	Let $(\tau,\eta)\in \Xi$. Then $\mu^{+}=\mu^{-}$ if and only if
	\begin{item}
		\item(i) $(\tau,\eta)=(\tau,0)$.
		\item(ii) $(\tau,\eta)= (0,\eta)$, and $(\dot{v}^{\pm}_{1})^{2}<\bar{C}_{B}^{2}$.
	\end{item}
\end{lemm}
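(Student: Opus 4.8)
The plan is to characterize exactly when $\mu^{+}(\tau,\eta)=\mu^{-}(\tau,\eta)$ by comparing the two defining equations from Lemma 4.1, namely $(\mu^{\pm})^{2}=\frac{(\tau+i\dot{v}^{\pm}_{1}\eta)^{2}}{\bar{C}_{B}^{2}}+\eta^{2}$, and then keeping track of the sign conventions (i.e.\ $\mathfrak{R}\mu^{\pm}\ge 0$) that pin down which square root we take. First I would note that if $\mu^{+}=\mu^{-}$ then in particular $(\mu^{+})^{2}=(\mu^{-})^{2}$, which gives $(\tau+i\dot{v}^{+}_{1}\eta)^{2}=(\tau+i\dot{v}^{-}_{1}\eta)^{2}$. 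Using $\dot{v}^{+}_{1}=-\dot{v}^{-}_{1}$ from \eqref{2.4}, this reads $(\tau+i\dot{v}^{+}_{1}\eta)^{2}=(\tau-i\dot{v}^{+}_{1}\eta)^{2}$, i.e.\ $4i\tau\eta\dot{v}^{+}_{1}=0$; assuming $\dot{v}^{+}_{1}\neq 0$ (the genuinely MHD/nontrivial case; the degenerate case $\dot{v}^{+}_{1}=0$ can be noted separately), this forces $\tau\eta=0$, so either $\eta=0$ or $\tau=0$.

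Next I would treat the two resulting cases. In case $\eta=0$: then $(\mu^{\pm})^{2}=\tau^{2}/\bar{C}_{B}^{2}$, and by Lemma 4.1(i) together with the extension-by-continuity and the sign normalization $\mathfrak{R}\mu^{\pm}\ge 0$, both roots equal the same branch $\mu^{\pm}=\tau/\bar{C}_{B}$ (for $\mathfrak{R}\tau>0$ this is the root with positive real part; at $\tau=i\delta$ it is $i\delta/\bar{C}_{B}$). Hence $\mu^{+}=\mu^{-}$ holds for every $(\tau,0)\in\Xi$, giving item (i). In case $\tau=0$ (so $(\tau,\eta)=(0,\eta)$ with $\eta\neq 0$, which lies in $\Xi$): then $(\mu^{\pm})^{2}=-\frac{(\dot{v}^{\pm}_{1})^{2}\eta^{2}}{\bar{C}_{B}^{2}}+\eta^{2}=\eta^{2}\bigl(1-\frac{(\dot{v}^{\pm}_{1})^{2}}{\bar{C}_{B}^{2}}\bigr)$, and since $(\dot{v}^{+}_{1})^{2}=(\dot{v}^{-}_{1})^{2}$ the two right-hand sides coincide. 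I would then invoke the explicit continuous extensions in Lemma 4.1(ii): if $(\dot{v}^{\pm}_{1})^{2}<\bar{C}_{B}^{2}$, the condition $-(\tfrac{\dot{v}^{\pm}_{1}}{\bar{C}_{B}}\pm1)<\tfrac{\delta}{\bar{C}_{B}\eta}<-(\tfrac{\dot{v}^{\pm}_{1}}{\bar{C}_{B}})+1$ contains $\delta/\eta=0$, so by \eqref{4.16} $\mu^{\pm}(0,\eta)=\sqrt{\eta^{2}-(\dot{v}^{\pm}_{1})^{2}\eta^{2}/\bar{C}_{B}^{2}}$, a common positive real number, whence $\mu^{+}=\mu^{-}$; this is item (ii). Conversely, if $(\dot{v}^{\pm}_{1})^{2}>\bar{C}_{B}^{2}$, then $\delta/\eta=0$ lies in the regime $\tfrac{\delta}{\bar{C}_{B}\eta}<-\tfrac{\dot{v}^{+}_{1}}{\bar{C}_{B}}+1$ for the $+$ root (so $\mu^{+}=i\sqrt{(\dot{v}^{+}_{1})^{2}\eta^{2}/\bar{C}_{B}^{2}-\eta^{2}}$ by \eqref{4.19}) while for the $-$ root, using $\dot{v}^{-}_{1}=-\dot{v}^{+}_{1}$, we land in $\tfrac{\delta}{\bar{C}_{B}\eta}<-\tfrac{\dot{v}^{-}_{1}}{\bar{C}_{B}}-1$, giving $\mu^{-}=-i\sqrt{(\dot{v}^{+}_{1})^{2}\eta^{2}/\bar{C}_{B}^{2}-\eta^{2}}$ by \eqref{4.18}; these are purely imaginary and opposite, hence unequal (they are equal only in the borderline $(\dot{v}^{+}_{1})^{2}=\bar{C}_{B}^{2}$, where both vanish — consistent with Lemma 4.2's second case and worth a remark). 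That establishes the "only if" direction in (ii), and together with the first paragraph completes both implications.

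The main obstacle I anticipate is bookkeeping with branch choices rather than anything deep: squaring loses the sign information, so the whole content is in verifying that the continuous extensions from Lemma 4.1 actually give \emph{equal} (not merely $\pm$-related) roots in cases (i) and (ii), and \emph{opposite} roots otherwise. In particular one must be careful that the intervals in Lemma 4.1(ii) are written in terms of $\dot{v}^{\pm}_{1}$ separately, and to apply them at $(0,\eta)$ one substitutes $\delta=0$ into each and checks which sub-case is active for the $+$ index versus the $-$ index, remembering $\dot{v}^{-}_{1}=-\dot{v}^{+}_{1}$ and $\eta>0$ (allowed by the symmetry $\Sigma(\tau,\eta)=\Sigma(\tau,-\eta)$ noted before Lemma 4.1). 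A minor additional point is the degenerate case $\dot{v}^{+}_{1}=0$: then $\mu^{+}\equiv\mu^{-}$ identically on all of $\Xi$, which one should either exclude by hypothesis or fold into the statement; I expect the paper intends $\dot{v}^{+}_{1}\neq 0$ throughout, so I would simply remark on it.
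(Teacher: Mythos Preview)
Your proposal is correct and follows essentially the same approach as the paper: first reduce $\mu^{+}=\mu^{-}$ to $(\mu^{+})^{2}=(\mu^{-})^{2}$, which (using $\dot{v}^{-}_{1}=-\dot{v}^{+}_{1}$) forces $\tau\eta=0$, then check each case via the explicit branch formulas of Lemma~4.1 to distinguish the subsonic case $(\dot{v}^{\pm}_{1})^{2}<\bar{C}_{B}^{2}$ (both roots real positive, hence equal) from the supersonic case (roots $\pm i\sqrt{\cdots}$, hence unequal). Your treatment is just a bit more explicit about the interval bookkeeping and the degenerate cases $\dot{v}^{+}_{1}=0$ and $(\dot{v}^{+}_{1})^{2}=\bar{C}_{B}^{2}$; note one sign slip in your inequality for the $+$ root at $\delta=0$ (it should read $\tfrac{\delta}{\bar{C}_{B}\eta}>-\tfrac{\dot{v}^{+}_{1}}{\bar{C}_{B}}+1$ when $\dot{v}^{+}_{1}>\bar{C}_{B}>0$), though you correctly invoke \eqref{4.19} afterward.
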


\begin{proof}
	From the equation \eqref{4.15} it follows that $(\mu^{+})^{2}=(\mu^{-})^{2}$ if and only if $\eta=0$ or $\tau=0$. If $\eta= 0$, then $\mu^{+}=\mu^{-}= \tau/\bar{C}_{B}$ which corresponds to the first case. If $\tau=0$, then $(\mu^{+})^{2}=(\mu^{-})^{2}=(1-\frac{(\dot{v}^{\pm}_{1})^{2} }{\bar{C}_{B}^{2}} ) \eta^{2}$. For $1-\frac{(\dot{v}^{\pm}_{1})^{2}}{\bar{C}_{B}^{2}}<0$  we invoke Lemma 4.1 to obtain $\mu^{\pm}=\pm i\eta \sqrt{\frac{(\dot{v}^{\pm}_{1})^{2} }{\bar{C}_{B}^{2}}-1}$,  yielding $\mu^{+}-\mu^{-}= 2 i \eta \sqrt{\frac{(\dot{v}^{\pm}_{1})^{2} }{\bar{C}_{B}^{2}-1}}\neq 0$. For $1-\frac{(\dot{v}^{\pm}_{1})^{2} }{\bar{C}_B^2}>0$ we obtain $\mu^{\pm}=
	\sqrt{1-\frac{(\dot{v}^{\pm}_{1})^{2} }{\bar{C}_{B}^{2}}}$ which corresponds to the second case.
\end{proof}

According to the definition of $\Sigma$ and Lemma 4.2,  we can easily  verify that $\Sigma(\tau,0)=\tau^{2}\neq 0$ for $(\tau,0)\in \Xi$ and $\Sigma(0,\eta)\neq 0$ for $(0,\eta)\in \Xi$. Furthermore, it can be readily checked that $\Sigma(\tau\, \eta)= \Sigma(\tau, -\eta)$. Thus, we can assume, without loss of generality, that  $\tau\neq 0$, $\eta\neq 0 $ and $ \eta>0$. From Lemma 4.3, we conclude that $\mu^{+}- \mu^{-}\neq 0$. Therefore we compute
\begin{equation}
	\begin{aligned}
		\frac{\mu^{+}-\mu^{-}}{\mu^{+}+\mu^{-}}= \frac{(\mu^{+}-\mu^{-})^{2}}{(\mu^{+})^{2}-(\mu^{-})^{2}}= \frac{\bar{C}_{B}^{2}(\mu^{+}-\mu^{-})^{2}}{4i \dot{v}^{+}_{1} \tau},
	\end{aligned}
\end{equation}
and
\begin{equation}
		(\mu^{+}-\mu^{-})^{2}= 2[(\frac{\tau}{\bar{C}_{B}})^{2}- (\frac{\dot{v}^{+}_{1}\eta}{\bar{C}_{B}})^{2} +\eta^{2}- \mu^{+}\mu^{-}],
\end{equation}
therefore we deduce that
\begin{equation}\label{4.28}
	\begin{aligned}
		\frac{\mu^{+}-\mu^{-}}{\mu^{+}+\mu^{-}}=\frac{2[{\tau}^{2}- (\dot{v}^{+}_{1}\eta)^{2} +\bar{C}_{B}^{2}(\eta^{2}- \mu^{+}\mu^{-})]}{4i \dot{v}^{+}_{1} \tau},
	\end{aligned}
\end{equation}
and substituting this last expression \eqref{4.28} into \eqref{3.155} we can rewrite it as
\begin{equation}\label{4.29}
	\begin{aligned}
		\bar{C}_{B}^{2}( \mu^{+}\mu^{-}-\eta^{2})\hat{f}+ \frac{\mu^{+} \mu^{-}}{\mu^{+}+\mu^{-}}W=0,~~if~x_{2}=0,
	\end{aligned}
\end{equation}
the symbol $\Sigma$ can be  reformulated as
\begin{equation}\label{4.30}
	\begin{aligned}
		\Sigma= \bar{C}_{B}^{2}( \mu^{+}\mu^{-}-\eta^{2}).
	\end{aligned}
\end{equation}

\section{The stability  case}
\quad \quad In this section, we will discuss the roots of  the symbol \eqref{4.30} in the stability case.
\begin{lemm}
	Let $\Sigma(\tau,\eta)$ be the symbol defined in \eqref{4.30}, for $(\tau, \eta) \in \Xi$. When $M_B>\sqrt{2}$, then $\Sigma(\tau,\eta)=0$ if and only if
		\begin{equation}\label{5.1}
			\tau= \pm i X_{1} \eta,
		\end{equation}
		where $X_{1}= \sqrt{ (\dot{v}^{+}_{1})^{2}+\bar{C}_{B}^{2}- \sqrt{\bar{C}_{B}^{4}+ 4\bar{C}_{B}^{2}(\dot{v}^{+}_{1})^{2}}}$, in this case the Kelvin-Helmholtz instability can be inhibited. Each of these root is simple.
\end{lemm}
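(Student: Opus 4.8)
The plan is to convert the transcendental condition $\Sigma(\tau,\eta)=0$ into a polynomial one by squaring, solve that polynomial explicitly, and then use the continuous extension of the roots $\mu^{\pm}$ from Lemma 4.1 to discard the spurious solutions created by squaring. First I would dispose of the degenerate frequencies: if $\eta=0$ then $\Sigma(\tau,0)=\tau^{2}\neq0$ on $\Xi$, and if $\tau=0$ then $M_{B}>\sqrt2$ forces $\dot{v}^{+}_{1}>\bar{C}_{B}$, so Lemma 4.2 (equivalently, evaluating \eqref{4.30} with the values $\mu^{\pm}(0,\eta)$ from Lemma 4.1) gives $\Sigma(0,\eta)=[(\dot{v}^{+}_{1})^{2}-2\bar{C}_{B}^{2}]\eta^{2}\neq0$. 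Hence we may assume $\tau\neq0$, $\eta\neq0$; by $\Sigma(\tau,\eta)=\Sigma(\tau,-\eta)$ we take $\eta>0$, by $\Sigma(\bar\tau,\eta)=\overline{\Sigma(\tau,\eta)}$ (which follows from $\dot{v}^{-}_{1}=-\dot{v}^{+}_{1}$ and the characterization of $\mu^{\pm}$) it suffices to find the roots with $\delta\ge0$, and since only $(\dot{v}^{+}_{1})^{2}$ enters the final answer we may take $\dot{v}^{+}_{1}>0$. Writing $v:=\dot{v}^{+}_{1}$, on this set \eqref{4.30} gives $\Sigma=\bar{C}_{B}^{2}(\mu^{+}\mu^{-}-\eta^{2})$, so $\Sigma=0$ iff $\mu^{+}\mu^{-}=\eta^{2}$.

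Squaring and using $(\mu^{\pm})^{2}=\bar{C}_{B}^{-2}(\tau\pm iv\eta)^{2}+\eta^{2}$ from \eqref{4.15}, one finds
\[
\bar{C}_{B}^{4}\big((\mu^{+})^{2}(\mu^{-})^{2}-\eta^{4}\big)=(\tau^{2}+v^{2}\eta^{2})^{2}+2\bar{C}_{B}^{2}\eta^{2}(\tau^{2}-v^{2}\eta^{2})=:P(\tau^{2},\eta).
\]
Thus every root of $\Sigma$ satisfies $P(\tau^{2},\eta)=0$, a quadratic in $\tau^{2}$ with discriminant $\bar{C}_{B}^{2}(\bar{C}_{B}^{2}+4v^{2})$ and roots $\tau^{2}=-X_{1}^{2}\eta^{2}$, $\tau^{2}=-X_{2}^{2}\eta^{2}$, where $X_{1,2}^{2}=v^{2}+\bar{C}_{B}^{2}\mp\sqrt{\bar{C}_{B}^{4}+4\bar{C}_{B}^{2}v^{2}}$. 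Since $(v^{2}+\bar{C}_{B}^{2})^{2}-(\bar{C}_{B}^{4}+4\bar{C}_{B}^{2}v^{2})=v^{2}(v^{2}-2\bar{C}_{B}^{2})$, the hypothesis $M_{B}>\sqrt2$ is precisely what makes $X_{1}^{2}>0$; moreover $X_{2}^{2}>0$ always and $X_{1}<X_{2}$. In particular $\tau^{2}$ is real and negative, so no root with $\mathfrak{R}\tau>0$ exists (this is the inhibition of the Kelvin--Helmholtz instability) and the only candidates are $\tau=\pm iX_{1}\eta$ and $\tau=\pm iX_{2}\eta$. It remains to decide which of these satisfy $\mu^{+}\mu^{-}=+\eta^{2}$ rather than $-\eta^{2}$.

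This is where Lemma 4.1 is used. Two inequalities, each reducing to $\bar{C}_{B}^{4}>0$, are the crux: $X_{1}<v-\bar{C}_{B}$ and $X_{2}>v+\bar{C}_{B}$ (note $v>\bar{C}_{B}$ since $M_{B}>\sqrt2$). From them Lemma 4.1 identifies the branches: at $\tau=iX_{1}\eta$ one is in case \eqref{4.19} for $\mu^{+}$ and in case \eqref{4.18} for $\mu^{-}$, so $\mu^{+}=i\bar{C}_{B}^{-1}\eta\sqrt{(X_{1}+v)^{2}-\bar{C}_{B}^{2}}$ and $\mu^{-}=-i\bar{C}_{B}^{-1}\eta\sqrt{(v-X_{1})^{2}-\bar{C}_{B}^{2}}$; at $\tau=iX_{2}\eta$ both $\mu^{\pm}$ are in case \eqref{4.19}, so $\mu^{+}=i\bar{C}_{B}^{-1}\eta\sqrt{(X_{2}+v)^{2}-\bar{C}_{B}^{2}}$ and $\mu^{-}=i\bar{C}_{B}^{-1}\eta\sqrt{(X_{2}-v)^{2}-\bar{C}_{B}^{2}}$. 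The algebraic identity $\big((X_{j}+v)^{2}-\bar{C}_{B}^{2}\big)\big((X_{j}-v)^{2}-\bar{C}_{B}^{2}\big)=\bar{C}_{B}^{4}$, which follows directly from $X_{j}^{2}=v^{2}+\bar{C}_{B}^{2}\mp\sqrt{\bar{C}_{B}^{4}+4\bar{C}_{B}^{2}v^{2}}$, then gives $\mu^{+}\mu^{-}=\eta^{2}$ at $\tau=\pm iX_{1}\eta$ but $\mu^{+}\mu^{-}=-\eta^{2}$ at $\tau=\pm iX_{2}\eta$. Hence $\Sigma=0$ exactly at $\tau=\pm iX_{1}\eta$, which together with the degenerate cases proves the equivalence, with $X_{1}$ as stated.

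Finally, for simplicity I would factor $P(\tau^{2},\eta)=(\tau^{2}+X_{1}^{2}\eta^{2})(\tau^{2}+X_{2}^{2}\eta^{2})=\Sigma\,\widetilde{\Sigma}$ with $\widetilde{\Sigma}:=\bar{C}_{B}^{2}(\mu^{+}\mu^{-}+\eta^{2})$. Because $0<X_{1}<X_{2}$, the quartic $P$ in $\tau$ has four distinct, hence simple, zeros $\pm iX_{1}\eta,\pm iX_{2}\eta$; and $\widetilde{\Sigma}\to2\bar{C}_{B}^{2}\eta^{2}\neq0$ as $\tau\to iX_{1}\eta$, while $\tau^{2}+X_{2}^{2}\eta^{2}$ stays nonzero there, so near $\tau=iX_{1}\eta$ we may write $\Sigma=(\tau-iX_{1}\eta)\,g(\tau,\eta)$ with $g=(\tau+iX_{1}\eta)(\tau^{2}+X_{2}^{2}\eta^{2})/\widetilde{\Sigma}$ continuous and non-vanishing; thus $\tau=iX_{1}\eta$ is a simple zero of $\Sigma$, and $\tau=-iX_{1}\eta$ follows by conjugation. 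The main obstacle is the branch bookkeeping of the third paragraph: correctly reading $\mu^{\pm}$ off the piecewise formulas of Lemma 4.1 at the purely imaginary points $\tau=\pm iX_{j}\eta$ and tracking the sign of $\mu^{+}\mu^{-}$ so as to separate the genuine roots from the extraneous ones produced by squaring; the inequalities $X_{1}<v-\bar{C}_{B}$, $X_{2}>v+\bar{C}_{B}$ and the radicand identity are the technical heart.
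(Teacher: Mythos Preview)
Your proposal is correct and follows essentially the same route as the paper: reduce $\Sigma=0$ to $\mu^{+}\mu^{-}=\eta^{2}$, square to obtain the quartic in $\tau/(i\eta)$ with roots $X_{1}^{2},X_{2}^{2}$, and use the branch identification from Lemma~4.1 together with $X_{1}<\dot{v}^{+}_{1}-\bar{C}_{B}$, $X_{2}>\dot{v}^{+}_{1}+\bar{C}_{B}$ to discard $X_{2}$. The only noteworthy difference is in the simplicity argument: the paper differentiates $\tilde\mu^{+}\tilde\mu^{-}+1$ directly (using analyticity of $\mu^{\pm}$ away from their zeros) to get a nonzero derivative at $X_{1}$, whereas you factor the polynomial $P=(\tau^{2}+X_{1}^{2}\eta^{2})(\tau^{2}+X_{2}^{2}\eta^{2})=\Sigma\,\widetilde{\Sigma}$ and read off simplicity from the simple zeros of $P$ and the nonvanishing of $\widetilde{\Sigma}$; your version is slightly more elementary since it only needs continuity of $\mu^{\pm}$, but the content is the same.
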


\begin{proof}
	Let us set $\mu^{+}\mu^{-}-\eta^{2}=0$ and introduce two quantities:
	\begin{equation}\label{5.3}
		X=\frac{\tau}{i \eta},~\tilde{\mu}^{\pm}= \frac{\mu^{\pm}}{i \eta}.
	\end{equation}
	By direct calculation, we can obtain
	\begin{equation} \label{5.4}
		\tilde{\mu}^{+}\tilde{\mu}^{-}=-1,
	\end{equation}
	and
	\begin{equation}\label{5.5}
		(\tilde{\mu}^{+})^{2}(\tilde{\mu}^{-})^{2}=1.
	\end{equation}
	
	By the formula of the roots $\mu^{\pm}$, it follows that
	\begin{equation}\label{5.6}
		(\tilde{\mu}^{+})^{2}=\frac{(X+\dot{v}^{+}_{1})^{2}}{	\bar{C}_{B}^{2}}-1,
	\end{equation}
	and
	\begin{equation}\label{5.7}
		(\tilde{\mu}^{-})^{2}=\frac{(X-\dot{v}^{+}_{1})^{2}}{\bar{	C}_{B}^{2}}-1.
        \end{equation}
	Hence we have
	\begin{equation}\label{5.8}
		[(X+\dot{v}^{+}_{1})^{2}-\bar{C}_{B}^{2}][(X-\dot{v}^{+}_{1})^{2} -\bar{C}_{B}^{2}]=\bar{C}_{B}^{4},
	\end{equation}
	which leads to the following equation for $X^{2}$:
	\begin{equation}\label{5.9}
		\begin{aligned}
		X^{4} - 2 ((\dot{v}^{+}_{1})^{2}+\bar{C}_{B}^{2}) X^{2} + (\dot{v}^{+}_{1})^{4}- 2\bar{C}_{B}^{2} (\dot{v}^{+}_{1})^{2}=0.
		\end{aligned}
	\end{equation}
	Using the quadratic formula, the two roots of the above equation are
	\begin{equation}\label{5.10}
		X_{1}^{2}= (\dot{v}^{+}_{1})^{2}+\bar{C}_{B}^{2}- \sqrt{\bar{C}_{B}^{4}+ 4\bar{C}_{B}^{2}(\dot{v}^{+}_{1})^{2}},
	\end{equation}
	and
	\begin{equation}\label{5.11}
		X_{2}^{2}= (\dot{v}^{+}_{1})^{2}+\bar{C}_{B}^{2}+ \sqrt{\bar{C}_{B}^{4}+ 4\bar{C}_{B}^{2}(\dot{v}^{+}_{1})^{2}}.
	\end{equation}

	We claim that the points $(\tau,\eta)\in \Sigma$ with $\tau=\pm i X_{2} \eta$ are not the roots of $\mu^{+}\mu^{-}= \eta^{2}$. Without loss of generality, we can assume that $X_{2}$ is positive. From \eqref{5.11}, we deduce
	\begin{equation}
		X_{2}^{2}>  (\dot{v}^{+}_{1} + \bar{C}_{B})^{2},
	\end{equation}
	from this, we deduce that   $~ X_{2}> \dot{v}^{+}_{1} + \bar{ C}_{B}$,  then it follows that  $(X_{2} \pm \dot{v}^{+}_{1}) > \bar{C}_{B}^{2}$, therefore $(\tilde{\mu}^{\pm})^{2}>0$.
	By using the formula \eqref{5.6}, we have
	\begin{equation}
		\begin{aligned}
			\tilde{\mu}^{+}=  \sqrt{\frac{(X+\dot{v}^{+}_{1})^{2}}{\bar{	C}_{B}^{2}}-1}>0,
		\end{aligned}
	\end{equation}
	and
	\begin{equation}\label{5.12}
		\tilde{\mu}^{-}=\sqrt{\frac{(X-\dot{v}^{+}_{1})^{2}}{	\bar{C}_{B}^{2}}-1}>0,
	\end{equation}
	from which we know that   \eqref{5.4}  is not satisfied.
    
    Similarly, we can show that $(\tau,\eta)\in \Sigma$ with $\tau=\pm i X_{1}\eta$ is  root of $\mu^{+}\mu^{-}= \eta^{2}$.

	We aim to demonstrate that, under the condition $M_{B}>\sqrt{2}$, the roots corresponding to $X_{1}$ are simple. Since \eqref{5.4} does not admit a root at $\eta=0$, the point $(\tau,\eta)\in \Sigma$ satisfying $\mu^{\pm}=0$ are not the roots of $\mu^{+}\mu^{-}-\eta^{2}=0$. From the formula for the roots $\mu^{\pm}$, we see that $\mu^{\pm}$ are analytic in the vicinity of points where they do not vanish. Thus, we can differentiate equations \eqref{5.6} and \eqref{5.7} with respect to $X$ at $X = X_{1}$ to obtain 
	\begin{equation}\label{5.19}
		\frac{d \tilde{\mu}^{\pm}}{d X}|_{X=X_{1}} = \frac{X_{1}\pm \dot{v}^{\pm}_{1}} {\tilde{\mu}^{\pm} \bar{C}_{B}^{2}}.
	\end{equation}
	Thus
	\begin{equation}\label{5.20}
		\frac{d (\tilde{\mu}^{+}\tilde{\mu}^{-}+1)}{d X}|_{X=X_{1}} = \frac{(X_{1}+ \dot{v}^{+}_{1}) (\tilde{\mu}^{-})^{2}+(X_{1}- \dot{v}^{+}_{1}) (\tilde{\mu}^{+})^{2}} {\tilde{\mu}^{+}\tilde{\mu}^{-} \bar{C}_{B}^{2}}.
	\end{equation}
	Plugging \eqref{5.6} and \eqref{5.7} into \eqref{5.20}, we obtain
	\begin{equation}
		\begin{aligned}
			\frac{d (\tilde{\mu}^{+}\tilde{\mu}^{-}+1)}{d X}|_{X=X_{1}} 
			&=\frac{d \tilde{\mu}^{+}}{d X} \tilde{\mu}^{-}  |_{X=X_{1}} + \frac{d \tilde{\mu}^{-}}{d X} \tilde{\mu}^{+} |_{X=X_{1}}\\
			&=\frac{2X_{1}(X_{1}^{2}- (\dot{v}^{+}_{1})^{2}  -\bar{C}_{B}^{2})} {\tilde{\mu}^{+}\tilde{\mu}^{-} \bar{C}_{B}^{4}},
		\end{aligned}
	\end{equation}
	using \eqref{5.10}, we have
	\begin{equation}
		\frac{d (\tilde{\mu}^{+}\tilde{\mu}^{-}+1)}{d X}|_{X=X_{1}} \neq 0.
	\end{equation}
	Hence we have proved  $(\tau,\eta)\in \Sigma$ with $\tau=\pm i X_{1} \eta$ are all simple roots of \eqref{5.4} provided $\dot{v}^{+}_{1}>\sqrt{2} \bar{C}_{B}$. More precisely, near $\tau=\pm i X_{1} \eta$, we have $\mu^{+} \mu^{-}-\eta^{2}= (i X_{1} \eta) l^{\pm}(\tau,\eta)$ for some continuous $l^{\pm}(\tau,\eta)\neq0$ respectively.
\end{proof}

\section{Proof of Theorem 2.2}
\begin{lemm}
	Let $\Sigma$ be the symbol defined by \eqref{4.144} and $s \in \mathbb{R}$, $\gamma\geq1$. Given any $f\in H^{s+2}_{\gamma}(\mathbb{R}^{2})$, let $g$ be the function defined by
	\begin{equation}\label{6.1}
		\Sigma(\tau,\eta) \hat{f}(\tau,\eta)= \hat{g} (\tau,\eta)~~(\tau,\eta)\in \Xi,
	\end{equation}
	where $\hat{g}$ is the Fourier transform of $\tilde{g} := e^{-\gamma t}g$. Then $g\in H^{s}_{\gamma}(\mathbb{R}^{2}) $ with
	\begin{equation*}
		\|g\|_{H^{s}_{\gamma}(\mathbb{R}^{2})} \leq C \|f\|_{H^{s+2}_{\gamma}(\mathbb{R}^{2})},
	\end{equation*}
	for a suitable positive constant C independent of $\gamma$.
\end{lemm}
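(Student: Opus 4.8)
The plan is to show that $\Sigma$ acts on the weighted Sobolev scale as a Fourier multiplier of order $2$, namely $|\Sigma(\tau,\eta)|\le C\,\Lambda^{2}(\tau,\eta)$ for all $(\tau,\eta)\in\Xi$ with $C$ independent of $\gamma$; given this, the asserted bound for $g$ follows immediately from Plancherel's theorem. The structural input I would use is that, as noted after Theorem~2.1 and confirmed by the reformulation $\Sigma=\bar{C}_{B}^{2}(\mu^{+}\mu^{-}-\eta^{2})$ in \eqref{4.30}, the roots $\mu^{\pm}$ are homogeneous of degree $1$ in $(\tau,\eta)$, hence $\Sigma$ is homogeneous of degree $2$. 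Consequently it suffices to bound $|\Sigma|$ on the compact hemisphere $\Xi_{1}$: if $C_{0}:=\sup_{\Xi_{1}}|\Sigma|$, then writing an arbitrary $(\tau,\eta)\in\Xi$ as $(\tau,\eta)=\Lambda(\tau,\eta)\cdot(\tau_{1},\eta_{1})$ with $(\tau_{1},\eta_{1})\in\Xi_{1}$ and using homogeneity gives $|\Sigma(\tau,\eta)|=\Lambda^{2}(\tau,\eta)\,|\Sigma(\tau_{1},\eta_{1})|\le C_{0}\,\Lambda^{2}(\tau,\eta)$. Since $\Xi_{1}$ does not depend on $\gamma$, neither does $C_{0}$.

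The one point that needs care --- and the main obstacle --- is that $C_{0}<\infty$, i.e.\ that $\Sigma$ is continuous, hence bounded, on the \emph{closed} hemisphere $\Xi_{1}$, including the boundary frequencies with $\mathfrak{R}\tau=0$. On $\{\mathfrak{R}\tau>0\}$ this is clear. On the boundary I would argue via \eqref{4.30}: by Lemma~4.1 the roots $\mu^{\pm}$ extend continuously to the points $(i\delta,\eta)\in\Xi$, so $\mu^{+}\mu^{-}-\eta^{2}$, and therefore $\Sigma$, is continuous on all of $\Xi$. The delicate locus is the set of exceptional frequencies $(0,\eta)$ with $\eta\neq0$: there the ratio $(\mu^{+}-\mu^{-})/(\mu^{+}+\mu^{-})$ appearing in the original form \eqref{4.144} can be singular (precisely when $\dot{v}^{+}_{1}\ge\bar{C}_{B}$, by Lemma~4.1), but Lemma~4.2 shows $\Sigma$ still has a finite limit at such points, and Lemma~4.3 guarantees $\mu^{+}\neq\mu^{-}$ on the dense set $\{\tau\neq0,\ \eta\neq0\}$ where \eqref{4.144} and \eqref{4.30} coincide; hence \eqref{4.30} holds on all of $\Xi$ by continuity. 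Thus $\Sigma\in C(\Xi_{1})$ and $C_{0}=\max_{\Xi_{1}}|\Sigma|<\infty$. I expect this reconciliation of the two expressions for $\Sigma$ near the degenerate frequencies to be the only nontrivial step --- and it is essentially already carried out in Section~4.

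With the multiplier estimate established, the conclusion is routine. From \eqref{6.1} one has $|\hat{g}(\tau,\eta)|=|\Sigma(\tau,\eta)|\,|\hat{f}(\tau,\eta)|\le C_{0}\,\Lambda^{2}(\tau,\eta)\,|\hat{f}(\tau,\eta)|$, so by the definition of $\|\cdot\|_{s,\gamma}$ in the Notation,
\begin{align*}
\|g\|_{H^{s}_{\gamma}(\mathbb{R}^{2})}^{2}
&=\frac{1}{(2\pi)^{2}}\int\int_{\mathbb{R}^{2}}\Lambda^{2s}(\tau,\eta)\,|\hat{g}(\delta,\eta)|^{2}\\
&\le C_{0}^{2}\,\frac{1}{(2\pi)^{2}}\int\int_{\mathbb{R}^{2}}\Lambda^{2(s+2)}(\tau,\eta)\,|\hat{f}(\delta,\eta)|^{2}
=C_{0}^{2}\,\|f\|_{H^{s+2}_{\gamma}(\mathbb{R}^{2})}^{2},
\end{align*}
which is exactly the claimed inequality with $C=C_{0}$ independent of $\gamma$.
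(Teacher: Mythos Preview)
Your proof is correct and follows essentially the same approach as the paper: use that $\Sigma$ is homogeneous of degree $2$ on $\Xi$ to obtain $|\Sigma(\tau,\eta)|\le C\Lambda^{2}(\tau,\eta)$, then conclude via Plancherel. You are in fact more careful than the paper in justifying the finiteness of the supremum on $\Xi_{1}$ (continuity up to $\mathfrak{R}\tau=0$, including the degenerate points treated in Lemmas~4.1--4.3), a point the paper's proof simply asserts.
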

\begin{proof}
	From the definition of $\Sigma$, we can see that $\Sigma$ is a homogeneous of degree 2 on $\Xi$, thus there exists a positive constant $C$ such that
	\begin{equation*}
		|\Sigma(\tau, \eta)| \leq C(|\tau|^{2} + |\eta|^{2})= C \Lambda^{2}(\tau,\eta),~ for~all~(\tau, \eta)\in \Xi.
	\end{equation*}
	Then
	\begin{equation*}
		\| g\|_{H^{s}_{\gamma}(\mathbb{R}^{2})} =\frac{1}{2 \pi} \| \Lambda^{s} \hat{g} \|= \frac{1}{2 \pi} \| \Lambda^{s} \Sigma \hat{f}\| \leq C \|\Lambda^{s+2} \hat{f} \|= C \| \Lambda^{s+2} \hat{f} \|= C\| f \|_{H^{s+2}_{\gamma}(\mathbb{R}^{2})}.
	\end{equation*}
	
\end{proof}
In the following theorem we prove the a priori estimate $f$ to the equation  \eqref{6.1}, for a given g.
\begin{theo}
	Assume  $M_B>\sqrt{2}$. Let $\Sigma$ be the symbol defined by \eqref{4.144} and $s \in \mathbb{R}$, $\gamma\geq1$. Given any $f\in H^{s+2}_{\gamma}(\mathbb{R}^{2})$, let $g\in H^{s}_{\gamma}(\mathbb{R}^{2})$ be the function defined by \eqref{6.1}. Then there exists a positive constant C such that  the following estimate holds
	\begin{equation}\label{6.2}
		\gamma \|f\|_{H^{s+1}_{\gamma}(\mathbb{R}^{2})} \leq C \|g\|_{H^{s}_{\gamma}(\mathbb{R}^{2})}.
	\end{equation}
\end{theo}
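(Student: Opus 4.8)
The plan is to obtain the estimate by bounding $|\Sigma(\tau,\eta)|$ from below by a constant multiple of $\Lambda(\tau,\eta)\cdot\gamma$ away from the zero set, and by analyzing the behavior near the simple roots $\tau=\pm iX_1\eta$ separately. Since $\Sigma$ is homogeneous of degree $2$ on $\Xi$, it suffices by a scaling argument to work on the compact hemisphere $\Xi_1=\{|\tau|^2+\eta^2=1,\ \mathfrak{R}\tau\geq0\}$ and show there is $c>0$ with $|\Sigma(\tau,\eta)|\geq c\,\gamma$ on $\Xi_1$ (here $\gamma=\mathfrak{R}\tau$), after which homogeneity upgrades this to $|\Sigma(\tau,\eta)|\geq c\,\gamma\,\Lambda(\tau,\eta)$ on all of $\Xi$. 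Granting this, the estimate follows immediately: from \eqref{6.1},
\begin{equation*}
\gamma^2\|f\|^2_{H^{s+1}_\gamma(\mathbb{R}^2)}=\frac{\gamma^2}{(2\pi)^2}\|\Lambda^{s+1}\hf\|^2\leq \frac{C}{(2\pi)^2}\|\Lambda^s\,\gamma\Lambda\,\hf\|^2\leq \frac{C}{(2\pi)^2}\|\Lambda^s\Sigma\hf\|^2=C\|g\|^2_{H^s_\gamma(\mathbb{R}^2)},
\end{equation*}
which is \eqref{6.2} after taking square roots.

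The heart of the matter is the lower bound $|\Sigma|\geq c\gamma$ on $\Xi_1$. Away from a neighborhood of the zeros this is just compactness, using that $\Sigma$ extends continuously to $\mathfrak{R}\tau=0$ (Lemma 4.1) and that, by Lemma 5.1, the only zeros of $\Sigma$ on $\Xi_1$ are the two points with $\tau=\pm iX_1\eta$ (and $\Sigma(\tau,0)=\tau^2\neq0$, $\Sigma(0,\eta)\neq0$ by the remarks after Lemma 4.3, so those are excluded). Near each such point one uses simplicity: Lemma 5.1 gives the factorization $\mu^+\mu^--\eta^2=(iX_1\eta)\,l^\pm(\tau,\eta)$ with $l^\pm$ continuous and nonvanishing, hence by \eqref{4.30}, $\Sigma=\bar{C}_B^2(\mu^+\mu^--\eta^2)$ behaves like a nonzero constant times $(\tau\mp iX_1\eta)$ to first order. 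Since near $\tau=iX_1\eta$ with $X_1$ real we have $\tau-iX_1\eta=\gamma+i(\delta-X_1\eta)$, the modulus $|\tau-iX_1\eta|\geq\gamma$, so $|\Sigma|\gtrsim\gamma$ locally; the same at the other root. Combining the compact-away estimate with the two local estimates yields $|\Sigma|\geq c\gamma$ uniformly on $\Xi_1$.

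The main obstacle will be making the local analysis near $\tau=\pm iX_1\eta$ fully rigorous — that is, upgrading the qualitative simplicity of the root (the non-vanishing derivative $\frac{d}{dX}(\tilde\mu^+\tilde\mu^-+1)|_{X=X_1}\neq0$ established in Lemma 5.1) into a genuine quantitative bound $|\Sigma(\tau,\eta)|\geq c\,|\tau\mp iX_1\eta|\geq c\gamma$ valid uniformly on a full two-sided complex neighborhood of the root in $\Xi$, including the boundary $\mathfrak{R}\tau=0$ where the branch expressions for $\mu^\pm$ in Lemma 4.1 change form. Concretely one writes $\Sigma$ near the root as $(\tau-iX_1\eta)\,m^\pm(\tau,\eta)$ via the factorization in Lemma 5.1 and the homogeneity (reducing to the variable $X=\tau/(i\eta)$ with $\eta>0$ fixed by homogeneity), checks $m^\pm$ is continuous and bounded away from zero on a neighborhood by continuity of $l^\pm$, and then verifies that the one-sided limits from $\gamma>0$ match the boundary values so the bound is not lost in the limit $\gamma\to0^+$. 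Once this is in place, the passage from $\Xi_1$ to $\Xi$ by homogeneity of degree $2$ and the Plancherel computation above complete the proof.
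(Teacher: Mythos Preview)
Your proposal is correct and follows essentially the same strategy as the paper: reduce by homogeneity to the compact hemisphere $\Xi_1$, use compactness away from the roots, and near each simple root $\tau=\pm iX_1\eta$ exploit the factorization from Lemma~5.1 to get $|\Sigma|\gtrsim|\tau\mp iX_1\eta|\geq\gamma$. The paper organizes the same dichotomy via a smooth partition of unity $\{\chi_i\}$ on $\Xi_1$ (writing $|\Sigma|\gtrsim\Lambda^2$ on elliptic patches and $|\Sigma|\gtrsim\gamma\Lambda$ on patches containing a root, then summing squares), whereas you package the same local estimates directly into the global pointwise bound $|\Sigma|\geq c\gamma$ on $\Xi_1$; these are equivalent formulations, and in fact your discussion of why simplicity yields the quantitative bound near $\gamma=0$ is more explicit than the paper's.
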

\begin{proof}
	Since $\Xi$ is a $C^{\infty}$ compact manifold, there exists a finite covering $(\mathcal{V}_{1}, ..., \mathcal{V}_{I})$ of $\Xi$ by such neighborhoods, and a smooth partition of unity $(\chi_{1},. .., \chi_{I})$ associated with this covering. The $\chi^{\prime}_{i}$s are nonnegative $C^{\infty}$ functions with
	\begin{equation*}
		supp \chi_{i} \subset \mathcal{V}_{i}, ~\sum_{i=1}^{I} \chi^{2}_{i} =1.
	\end{equation*}
	We will divide two cases: in the first case $\mathcal{V}_{i}$  is a neighborhood of an elliptic point, i.e., a point $(\tau_{0}, \eta_{0})$ where $\Sigma(\tau_{0}, \eta_{0}) \neq 0$. Thus we have
	\begin{equation}\label{7.3}
		C(|\tau|^{2}+ |\eta|^{2})  |\chi_{i} \hat{f}(\tau, \eta)| \leq |\Sigma(\tau, \eta) \chi_{i} \hat{f}(\tau, \eta)| = |\chi_{i} \hat{g}(\tau, \eta)|.
	\end{equation}
	On the other hand, in the second case  $\mathcal{V}_{i}$  is a neighborhood of a root of the symbol $\Sigma$, i.e., a point $(\tau_{0}, \eta_{0})$ where $\Sigma(\tau_{0}, \eta_{0}) = 0$. Thus we have
	\begin{equation}\label{7.4}
		C\gamma (|\tau|^{2}+ |\eta|^{2})^{1/2}  |\chi_{i} \hat{f}(\tau, \eta)| \leq |\Sigma(\tau, \eta) \chi_{i} \hat{f}(\tau, \eta)| = |\chi_{i} \hat{g}(\tau, \eta)|.
	\end{equation}
	In conclusion, adding up the square of \eqref{7.3} and \eqref{7.4}    and using that the $\chi_{i}$'s form a partition of unity yields
	\begin{equation*}
		C\gamma^{2} (|\tau|^{2}+ |\eta|^{2})  |\hat{f}(\tau, \eta)|^{2}  \leq |\hat{g}(\tau, \eta)|^{2}.
	\end{equation*}
	Multiplying the previous inequality by $(|\tau|^{2}+ |\eta|^{2})^{s}$, integrating with respect to $(\delta ,\eta)\in \mathbb{R}^{2}$ and using  Plancherel's theorem finally gives
	\begin{equation*}
		C\gamma^{2} \| \tilde{f}\|^{2}_{H^{s+1}_{\gamma}(\mathbb{R}^{2})} \leq C \|g\|_{H^{s}_{\gamma}(\mathbb{R}^{2})}.
	\end{equation*}

\end{proof}

In the following theorem we prove the existence of the solution $f$ to the equation \eqref{6.1}.

\begin{theo}
	Assume  $M_B>\sqrt{2}$. Let $\Sigma$ be the symbol defined by  \eqref{4.12} and $s \in \mathbb{R}$, $\gamma\geq1$. Given any $g\in H^{s}_{\gamma}(\mathbb{R}^{2})$  there exists a unique solution $f\in H^{s+1}_{\gamma}(\mathbb{R}^{2})$ of the equation \eqref{6.1}, satisfying the estimate \eqref{6.2}.
\end{theo}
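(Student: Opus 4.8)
The plan is to construct $f$ by directly inverting the Fourier multiplier $\Sigma$. By Lemma 5.1 the zeros of $\Sigma$ on $\Xi$ occur only at the purely imaginary frequencies $\tau=\pm iX_1\eta$, where $\mathfrak{R}\tau=0$; hence for every $\gamma\geq 1$ the symbol $\Sigma(\tau,\eta)$ is nonzero on the whole set $\{\mathfrak{R}\tau=\gamma\}\times\mathbb{R}$, and it is smooth and, for fixed $\gamma$, globally bounded away from $0$ there. I therefore set $\tilde f:=e^{-\gamma t}f$ with
\[
\hat f(\tau,\eta):=\frac{\hat g(\tau,\eta)}{\Sigma(\tau,\eta)},\qquad \tau=\gamma+i\delta,\ (\tau,\eta)\in\Xi,
\]
which defines a tempered distribution $f$ solving \eqref{6.1} by construction. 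Uniqueness is then immediate from the a priori estimate \eqref{6.2} of the preceding theorem: if $f_1,f_2$ both solve \eqref{6.1} with the same $g$, then $\Sigma(\hat f_1-\hat f_2)=0$, so \eqref{6.2} applied to $f_1-f_2$ with vanishing right-hand side forces $f_1=f_2$.

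It remains to show $f\in H^{s+1}_{\gamma}(\mathbb{R}^{2})$ together with the estimate, and here I would run the argument of the preceding theorem in reverse. Using the finite covering $(\mathcal{V}_1,\dots,\mathcal{V}_I)$ of the compact hemisphere $\Xi_1$ and the associated partition of unity $(\chi_i)$ with $\sum_i\chi_i^2=1$, extended to $\Xi$ as homogeneous functions of degree $0$: on a neighborhood of an elliptic point $|\Sigma|\geq c\Lambda^2$, so $|\chi_i\hat f|\leq C\Lambda^{-2}|\chi_i\hat g|$; on a neighborhood of one of the simple roots $\tau=\pm iX_1\eta$ the relevant lower bound is $|\Sigma(\tau,\eta)|\geq c\,\gamma\,\Lambda(\tau,\eta)$, giving $|\chi_i\hat f|\leq C(\gamma\Lambda)^{-1}|\chi_i\hat g|$. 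Since $\Lambda\geq\gamma\geq 1$, in either case $\gamma\Lambda^{s+1}|\chi_i\hat f|\leq C\Lambda^{s}|\chi_i\hat g|$; squaring, summing over $i$, using $\sum_i\chi_i^2=1$, integrating in $(\delta,\eta)$ and applying Plancherel yields $\gamma^2\|f\|^2_{H^{s+1}_{\gamma}(\mathbb{R}^{2})}\leq C\|g\|^2_{H^{s}_{\gamma}(\mathbb{R}^{2})}$. The same pointwise bound shows $\Lambda^{s+1}\hat f\in L^2(\mathbb{R}^{2})$, so that $f$ is a genuine element of $H^{s+1}_{\gamma}(\mathbb{R}^{2})$ rather than a merely formal solution, and \eqref{6.2} holds.

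The step that actually requires work is the lower bound $|\Sigma|\geq c\gamma\Lambda$ near the roots, uniformly in $\gamma$. For this I would use the simplicity of the roots from Lemma 5.1: near $\tau=\pm iX_1\eta$ one factors $\Sigma=\bar{C}_{B}^{2}(\mu^{+}\mu^{-}-\eta^{2})$ as the product of $(\tau\mp iX_1\eta)$ with a factor that is continuous, non-vanishing on the root ray, and positively homogeneous of degree $1$ (this factor is $\Sigma$ divided by the degree-one function $(\tau\mp iX_1\eta)$, which extends continuously across the ray precisely because the root is simple). Restricting to the compact set $\Xi_1$, the homogeneous factor is bounded below in modulus, while $|\tau\mp iX_1\eta|\geq\mathfrak{R}\tau=\gamma$; hence $|\Sigma|\geq c\gamma$ on $\Xi_1$ near each root. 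The homogeneity of $\Sigma$ of degree $2$ then upgrades this: for $(\tau,\eta)\in\Xi$ near a root ray one writes $(\tau,\eta)=\Lambda\cdot(\tau',\eta')$ with $(\tau',\eta')\in\Xi_1$ and $\mathfrak{R}\tau'=\gamma/\Lambda$, whence $|\Sigma(\tau,\eta)|=\Lambda^2|\Sigma(\tau',\eta')|\geq c\Lambda^2(\gamma/\Lambda)=c\gamma\Lambda$. Away from the roots the estimates are purely elliptic and follow from the compactness of $\Xi_1$ together with homogeneity. I expect the only delicate bookkeeping to be the patching of the elliptic and root neighborhoods with all constants independent of $\gamma$, but this is exactly parallel to the computation already carried out for the a priori estimate.
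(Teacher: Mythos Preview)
Your approach is correct and takes a genuinely different, more elementary route than the paper. The paper invokes a duality argument (deferring entirely to \cite{Morando}): one proves an a priori estimate for the adjoint problem and then obtains existence via Hahn--Banach or Riesz representation. You instead exploit the fact that $\Sigma$ is a constant-coefficient Fourier multiplier whose zeros, by Lemma~5.1, lie only on $\{\mathfrak{R}\tau=0\}$, so that for $\gamma\geq 1$ the symbol is pointwise invertible on the relevant frequency set and the solution can be written explicitly as $\hat f=\hat g/\Sigma$; the $H^{s+1}_\gamma$ regularity then follows from the same microlocal lower bounds $|\Sigma|\geq c\gamma\Lambda$ near the simple roots and $|\Sigma|\geq c\Lambda^2$ elsewhere that already underlie Theorem~6.2. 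Your argument is self-contained and avoids the abstract machinery; the duality route, by contrast, is what one would genuinely need in a variable-coefficient setting where no explicit inverse of the symbol is available. One minor remark: your appeal to Theorem~6.2 for uniqueness is slightly imprecise, since that statement carries the hypothesis $f\in H^{s+2}_\gamma$ rather than $H^{s+1}_\gamma$; but uniqueness is in any case immediate from the pointwise non-vanishing of $\Sigma$ on $\{\mathfrak{R}\tau=\gamma\}$, which you have already established, so that $\Sigma\hat f=0$ forces $\hat f=0$ almost everywhere.
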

\begin{proof}
	The proof of this theorem employs a duality argument. The details of the proof are similar to those in \cite{Morando}. Thus, we will omit them here.
	
\end{proof}

Now we can complete the proof of Theorem 2.2.
\begin{proof}
	We apply the result of Theorem 6.3 for
	\begin{equation*}
		\hat{g}(\tau, \eta)= - \frac{\mu^{+} \mu^{-}}{ \mu^{+}+ \mu^{-}} W,
	\end{equation*}
	with $W$ defined in \eqref{3.16}. We write
	\begin{equation*}
		\hat{g}= \hat{g}_{1}- \hat{g}_{2},
	\end{equation*}
	where
	\begin{equation*}
		\hat{g}_{1}= - \frac{\mu^{-}}{ \mu^{+}+ \mu^{-}} \int^{+\infty}_{0} e ^{- \mu^{+} y} \hat{\mathcal{K}}^{+} dy ,~  \hat{g}_{2}=- \frac{\mu^{-}}{ \mu^{+}+ \mu^{-}} \int^{+\infty}_{0} e ^{- \mu^{+} y} \hat{\mathcal{K}}^{+} dy.
	\end{equation*}
	By the Plancherel theorem and Cauchy-Schwarz inequality we have
	\begin{equation}
		\begin{aligned}
			\| g_{1}\|^{2} _{H^{s}_{\gamma}(\mathbb{R}^{2})} &= \frac{1}{(2\pi)^{2}} \int \int_{\mathbb{R}^{2}} \Lambda^{2s}|\frac{\mu^{-}}{ \mu^{+}+ \mu^{-}}\int^{+\infty}_{0} e ^{- \mu^{+} y} \hat{\mathcal{K}}^{+} dy   |^{2} d \delta d \eta\\
			&\leq \frac{1}{(2\pi)^{2}} \int \int_{\mathbb{R}^{2}} \Lambda^{2s}|\frac{\mu^{-}}{ \mu^{+}+ \mu^{-}}|^{2}  \frac{1}{ 2 \mathfrak{R}  \mu^{+}} (\int^{+\infty}_{0}  |\hat{\mathcal{K}}^{+} |^{2} dy   )
			d \delta d \eta.
		\end{aligned}
	\end{equation}
	Then we use the fact that $\frac{\mu^{-}}{ \mu^{+}+ \mu^{-}}$ is a homogeneous of degree zero in $\Xi$ so that
	\begin{equation*}
		|\frac{\mu^{-}}{ \mu^{+}+ \mu^{-}}|^{2} \leq C,~~for ~all~(\tau, \eta) \in \Xi,
	\end{equation*}
	for a suitable constant $C>0$. Moreover, from  Lemma 5.8 in \cite{Morando},  we have the estimate from below
	\begin{equation*}
		\mathfrak{R}  \mu^{+} \geq \frac{1}{\sqrt{2} c} \gamma.
	\end{equation*}
	Thus we obtain
	\begin{equation*}
		\| g_{1}\|^{2} _{H^{s}_{\gamma}(\mathbb{R}^{2})}\leq \frac{C}{\gamma} \int \int_{\mathbb{R}^{2}} \Lambda^{2s} (\int^{+\infty}_{0}  |\hat{\mathcal{K}}^{+} |^{2} dy   )
		d \delta d \eta = \frac{C}{\gamma} \| \mathcal{K}^{+} \|^{2}_{L^{2}(\mathbb{R}^{+}; H^{s}_{\gamma}(\mathbb{R}^{2}) )}.
	\end{equation*}
	The proof of the estimate $g_{2}$ follows a similar approach. This concludes the proof of Theorem 2.2.
	
\end{proof}

\section*{Acknowledgments}
\quad \quad The work of B. Xie is supported by NSF-China under grant number 11901207, 12326355, Guangzhou Science and Technology Project 2023A04J1315 and the National Key Program  of China (2021YFA1002900).

\phantomsection
\addcontentsline{toc}{section}{\refname}


\begin{thebibliography}{99}

\bibitem{Wang}
G.Q. Chen and Y.G. Wang,  Existence and Stability of Compressible Current-Vortex Sheets in Three-Dimensional Magnetohydrodynamics, Arch. Ration. Mech. Anal., 187 , 369-408, 2008.
\bibitem{Ch}
S.H. Chen and M.G. Kivelson, On nonsinusoidal waves at the Earth's
magnetopause, Geophys. Res. Lett., 20,  2699-2702, 1993.

\bibitem{Hu}
R.M. Chen, J. Hu and D.H. Wang, Linear stability of compressible vortex sheets in two-dimensional elastodynamics, Adv. Math., 311, 18-60, 2017.

\bibitem{Chen}
R.M. Chen, J. Hu and D.H. Wang, Linear stability of compressible vortex sheets in 2D elastodynamics: variable coefficients. Math. Ann., 376, 863-912, 2020.

\bibitem{Chen3}
R.M. Chen, J.L. Hu, D.H. Wang, T. Wang and D.F. Yuan, Nonlinear stability and existence of compressible vortex sheets in 2D elastodynamics. J. Diff. Equa., 269, 6899-6940, 2019.

\bibitem{chen1}
R.M. Chen, F.M. Huang, D.H. Wang and D.F. Yuan, Stabilization effect of elasticity on three-dimensional compressible vortex sheets, J. Math. Pures. Appl., 172, 105-138, 2023.
\bibitem{Wang2008}
G.Q. Chen and Y.G. Wang, Existence and stability of compressible current-vortex sheets in three-dimensional magnetohy-drodynamics, Arch. Ration. Mech. Anal., 187(3), 369–408, 2008.
\bibitem{Paolo}
G.Q. Chen, P. Secchi and T.Wang, Nonlinear Stability of Relativistic Vortex Sheets in Three-Dimensional Minkowski Spacetime, Arch. Ration. Mech. Anal., 232, 591-695, 2019.
\bibitem{Coulombel}
J.F. Coulombel and P. Secchi, The stability of compressible vortex sheets in two space dimensions, Indiana Univ. Math. J., 53, 941-1012, 2004.


\bibitem{Secchi}
J.F. Coulombel and P. Secchi, Nonlinear compressible vortex sheets in two space dimensions, Ann. Sci.cole Norm. Supr.,  41, 85-139, 2008.

\bibitem{Coulombel2}
 J.F. Coulombel, A.  Morando, P. Secchi and P. Trebeschi,  A priori estimates for 3D incompressible
	current-vortex sheets, Comm. Math. Phys., 311, 247–275,2012.

\bibitem{Dungey}
J.W. Dungey, Electrodynamics of the outer atmosphere, in Proceedings
of the Ionosphere, The Physical Society of
London, London, 255-265, 1955.

\bibitem{Ershkovich}
A. I. Ershkovich, Solar wind interaction with the tail of comet
Kohoutek, Planet. Space Sci., 24, 287-290, 1976.

\bibitem{Majda} 
A.J. Majda and A.L. Bertozzi, Vorticity and incompressible flow, Cambridge University Press, 2001.

\bibitem{Miles}
J. W. Miles, On the disturbed motion of a plane vortex sheet, J. Fluid Mech., 4, 538-552, 1958.
\bibitem{Miles1957}
J.W. Miles, On the reflection of sound at an interface of relative motion, J. Acoust. Soc. Am. 29, 226–228, 1957. 


\bibitem{Morando2}
A. Morando, Y. Trakhinin and P. Trebeschi,  Stability of incompressible current-vortex sheets,
	J. Math. Anal. Appl., 347,  502–520, 2008.

\bibitem{Trebeschi1}
A. Morando and P. Trebeschi, Two-dimensional vortex sheets for the nonisentropic Euler equations: linear stability, J. Hyperbolic Differ. Equ., 5, 487-518, 2008.

\bibitem{Trebeschi2}
A. Morando, P. Trebeschi and T. Wang, Two-dimensional vortex sheets for the nonisentropic Euler equations: nonlinear stability, J. Diff.
Equ., 266, 5397-5430, 2019.

\bibitem{Morando}
A. Morando, P. Secchi and P. Trebeschi, On the evolution equation of compressible vortex sheets, Math. Nachr., 293, 945-969, 2020.
\bibitem{Parker}
E.N. Parker, Dynamics of the interplanetary gas and magnetic
fields, Astrophys. J., 128, 664-678, 1958.
\bibitem{Serre}
D. Serre, Systems of Conservation Laws 2: Geometric Structures, Oscillations, and Initial-Boundary Value Problems, Cambridge University Press, 1999.

\bibitem{Shivamoggi}
B.K. Shivamoggi, Kelvin-Helmholtz instability of a compressible plasma in a magnetic field, Appl. Sci. Res., 37, 291–299, 1981. 

\bibitem{Sturrock}	
P.A. Sturrock, and R.E. Hartle, Two-fluid model of the solar
wind, Phys. Rev. Lett., 16, 628-636, 1966.	

\bibitem{Sun}
		Y. Z. Sun, W. Wang, Z. F. Zhang,:Well-posedness of the plasma-vacuum interface problem for ideal incompressible MHD. Arch. Ration. Mech. Anal. 234 (2019), 81-113.	

\bibitem{Syrovatskij}
S.I. Syrovatskij, The stability of tangential discontinuities in a magnetohydrodynamic medium. Z. Eksperim. Teoret. Fiz., 24,  622–629, 1953.


\bibitem{Trakhinin2}
Y. Trakhinin,  Existence of compressible current-vortex sheets: variable coefficients linear analysis, Arch. Ration. Mech. Anal., 177, 331-366, 2005.


\bibitem{Trakhinin}
Y. Trakhinin, The existence of current-vortex sheets in ideal compressible magnetohydrodynamics, Arch. Ration. Mech. Anal., 191, 245-310, 2009.

\bibitem{Yu1}
Y.G. Wang and F. Yu, Stabilization effect of magnetic fields on two-dimensional compressible current-vortex sheets, Arch. Ration. Mech. Anal., 208, 341-389, 2013.



\end{thebibliography}
\end{document}